\documentclass[smallextended,envcountsect,]{svjour3}
\smartqed
\usepackage{graphicx}

\usepackage{latexsym, amsmath,amsxtra, amssymb, latexsym, amscd}
\usepackage{enumerate,cite,color}
\usepackage{hyperref}
\usepackage{comment}

\def\Limsup{\mathop{{\rm Lim}\,{\rm sup}}}

\def\ox{\bar{x}}

\def\Limsup{\mathop{{\rm Lim}\,{\rm sup}}}

\def\gph{\mbox{\rm gph}\,}
\def\epi{\mbox{\rm epi}\,}

\def\R{\mathbb{R}}
\def\N{\mathbb{N}}

\def\gph{\mathrm{gph}\,}
\def\epi{\mathrm{epi}\,}

\begin{document}

\title{Second-Order Optimality Conditions for  Sparse Differentiable Optimization Problems via Limiting Second-Order Subdifferentials}

\titlerunning{Second-order optimality conditions for  sparse differentiable optimization}

\authorrunning{L.T.T. Huyen, L. Jiao, N.V. Tuyen}


\author{Luu Thi Thu Huyen \and Liguo Jiao  \and Nguyen Van Tuyen}

\institute{
           Luu Thi Thu Huyen \at
           Faculty of Natural Sciences, Hung Vuong University,
           \\
            Phu Tho, Vietnam;
           \\
           Department of Mathematics, Hanoi Pedagogical University 2,\\
           Xuan Hoa, Phu Tho, Vietnam \\
           luuthithuhuyen@hvu.edu.vn
           \and Liguo Jiao \at Academy for Advanced Interdisciplinary Studies, Northeast Normal University,
           \\
           Changchun 130024, Jilin Province, People's Republic of China
           \\
           jiaolg356@nenu.edu.cn
             \and
           Nguyen Van Tuyen,  Corresponding author \at
           Department of Mathematics, Hanoi Pedagogical University 2,\\
           Xuan Hoa, Phu Tho, Vietnam \\
           nguyenvantuyen83@hpu2.edu.vn; tuyensp2@yahoo.com                         
}

\date{Received: date / Accepted: date}

\maketitle

\begin{abstract}
In this paper, we investigate a class of sparse optimization problems in which both the objective and constraint functions are Fr\'echet differentiable and possess locally Lipschitz continuous gradient mappings. More precisely, by utilizing the limiting (Mordukhovich) second-order subdifferential of the associated Lagrangian function, we establish new second-order necessary and sufficient optimality conditions for local optimal solutions. The obtained results are derived under mild assumptions and extend several existing results in the literature. In addition, we apply our theoretical developments to sparse multiobjective optimization problems and derive second-order sufficient optimality conditions for efficient solutions. Several examples are also presented to demonstrate the applicability and effectiveness of the proposed results.
\end{abstract}
\keywords{Sparse optimization \and second-order optimality conditions \and limiting second-order subdifferential \and constraint qualifications \and  sparse multiobjective optimization}
\subclass{49K30 \and 90C30 \and 90C33 \and  49J52 \and 49J53 \and 90C29}


\section{Introduction}

Sparse optimization has attracted significant attention in recent years due to its wide range of applications in signal processing, machine learning, statistics, compressed sensing, image reconstruction, portfolio optimization, and control theory. A standard sparse optimization problem (SP) can be formulated as
\begin{equation*}
	\min f(x) \ \ \text{s.t.}\ \ x\in \Phi, \|x\|_0\leq s,
\end{equation*}
where $f\colon\R^n\to \R$ is a given objective function, $\Phi\subset \R^n$ is a constraint set, $s\in (0, n)$ is a given positive integer, and $\|\cdot\|_0$ denotes the number of nonzero components of a vector. The cardinality constraint $\|x\|_0\leq s$  is a fundamental tool for enforcing sparsity of solutions.

\medskip

The growing interest in sparse optimization is mainly motivated by its practical importance. In compressed sensing, sparse optimization is used to recover signals from incomplete measurements; see, e.g., \cite{CandesRombergTao2006,Donoho2006}. In statistics and machine learning, sparse models play a central role in variable selection and high-dimensional regression; see, e.g.,   \cite{Tibshirani1996,Hastie2009}. Sparse portfolio selection has also been extensively studied in mathematical finance, while sparse control problems arise naturally in networked systems and optimal control; see, e.g.,  \cite{Bienstock1996,Tillmann2024}. Since sparse solutions are often more interpretable, stable, and computationally efficient, sparse optimization has become one of the most active topics in modern optimization theory.

\medskip

From the mathematical viewpoint, sparse optimization problems are highly challenging because the cardinality function is discontinuous and nonconvex. Even when the objective function and the remaining constraints are smooth, the feasible set generated by the cardinality constraint is nonconvex and lacks a regular geometric structure. Consequently, many classical techniques from nonlinear programming cannot be directly applied. In particular, the derivation of optimality conditions and constraint qualifications becomes substantially more delicate than in standard smooth optimization problems.

\medskip

To overcome these difficulties, several approaches have been proposed in the literature. One important direction reformulates sparse optimization problems as mathematical programs with complementarity constraints  or disjunctive programs; see, e.g.,  \cite{Benko-et-al-22,BucherSchwartz2018,BurdakovKanzowSchwartz2016,Gfrerer2014,KanzowSchwartz2013,LiangYe2021,Mehlitz,Xiao-Ye-24,XuYe}. Another influential framework is based on variational analysis and generalized differentiation; see the monographs of Mordukhovich \cite{Mordukhovich_2006,Mordukhovich_2018} and Rockafellar and Wets \cite{Rockafellar1998}. These techniques provide powerful tools for analyzing nonsmooth and nonconvex structures arising in sparse optimization.

\medskip

First- and second-order optimality conditions for sparse optimization have been intensively investigated over the last decade. For instance, Beck and Elda  \cite{BeckEldar2013} introduced and investigated three types of first-order necessary optimality conditions for the \eqref{Problem} problem with a single sparsity constraint, namely, basic feasibility, L-stationarity, and coordinate-wise optimality. Kanzow and Schwartz \cite{KanzowSchwartz2013} studied continuous reformulations of sparse optimization problems and derived several stationarity conditions. Subsequently, Beck and Hall \cite{BeckHalla} generalized these results to optimization problems involving a continuously differentiable objective function over sparse symmetric sets, which constitute a particular class of feasible sets for the \eqref{Problem} problem. Recently, Lu \cite{Lu-2015} studied the notion of strong L-stationarity for optimization problems over sparse symmetric sets considered in \cite{BeckHalla}. In addition, for the \eqref{Problem} problem, Lu and Zhang \cite{LuZhang} established first-order necessary optimality conditions by employing subspace techniques under the Robinson constraint qualification. In \cite{Pan-Xiu-Zhou}, Pan et al. derived explicit formulas for the Bouligrand/Clarke tangent cone and the regular/Clarke normal cone to 
$$\mathcal{S}:=\{x\in\R^n\,:\, \|x\|_0\leq s\}$$ 
and established the corresponding first- and second-order optimality conditions. More recently, Pan et al. \cite{Pan-Luo-Xiu_2017} and Pan et al. \cite{Pan-Xiu-Fan} introduced the so-called restricted linear independence constraint qualification, restricted Robinson constraint qualification, and restricted Mangasarian--Fromovitz constraint qualification, and established first- and second-order optimality conditions for problem \eqref{Problem} with operator or functional constraints. Other constraint qualifications, together with first- and second-order optimality conditions for problem \eqref{Problem} with functional constraints, can be found in \cite{BurdakovKanzowSchwartz2016,Flegel-et-al-07,Henrion-Outrata-05,Movahedian-et-al-19,Movahedian-et-al,Xiao-Ye-24} and the references therein. We emphasize that all the aforementioned results concerning second-order optimality conditions for problem \eqref{Problem} were derived under rather strong assumptions, such as the $C^2$-smoothness of the objective and constraint functions.

\medskip

However, many practical optimization models involve functions that are only continuously differentiable rather than twice continuously differentiable. Such situations naturally arise in machine learning, composite optimization, and robust optimization models, for example, sparse Huber regression \cite{Pan-Wang-etal} and $\ell_2-\ell_p$ Tikhonov regularization \cite{Borges-etal}. In this framework, classical Hessian-based approaches are generally no longer applicable, and one must instead employ generalized second-order constructions such as second-order epi-derivatives \cite{Rockafellar1998,Borwein2005,Khanh-Tuan_2007,Mohammadi-etal} and second-order subdifferentials \cite{Mordukhovich_2006,Mordukhovich_2018,Mordukhovich_2024,An-Xu-Yen,An-Tuyen,Huy-Tuyen-16,Huy-Tuyen-19,Tuyen-Huy-Kim,ChieuLeeYen2017,Khanh-etal-25}. Therefore, the development of second-order optimality conditions for sparse optimization problems with non-$C^{2}$ data is of significant theoretical and practical importance.

\medskip

In \cite{ChenJW-et_al_2024}, Chen et al. established second-order optimality conditions for nonsmooth sparse multiobjective optimization problems by employing Dini directional derivatives of the objective functions together with the Bouligand tangent cone and the second-order tangent set of the sparse set. Kan and Song \cite{Kan-Song} derived second-order optimality conditions for the existence of augmented Lagrange multipliers in optimization problems with sparsity and abstract constraints via the second-order epi-derivative of the augmented Lagrangian. More recently, Chen et al. \cite{ChenJW-et-at-2026} investigated second-order optimality conditions for sparsity-constrained optimization problems under assumptions involving the Fr\'echet second-order subdifferential of the Lagrangian function, instead of the twice continuous differentiability of the objective and constraint functions. Nevertheless, the Fr\'echet second-order subdifferential is, in some sense, rather restrictive, since it may be empty even for $C^{1,1}$ functions; see Example \ref{Example-31} below. In such cases, the corresponding second-order necessary optimality conditions become meaningless.

\medskip

Motivated by the above observations, this paper is devoted to the study of second-order optimality conditions for sparse optimization problems with  $C^{1,1}$ data by employing the so-called limiting (Mordukhovich) second-order  subdifferential. It is worth noting that the limiting second-order  subdifferential of $C^{1,1}$ functions is always nonempty and compact. The results obtained in this work are new.

\medskip

The paper is organized as follows. In Section \ref{Section2}, we recall preliminary notions and basic results from variational analysis and generalized differentiation. Section \ref{Section 3} is devoted to the derivation of second-order necessary optimality conditions. In Section \ref{Section 4}, we establish second-order sufficient conditions for local optimality. Applications to multiobjective
optimization problems are presented in Section \ref{Section 5}. Finally, concluding remarks are presented in the last section.

\section{Preliminaries} \label{Section2}
Throughout this work we deal with the Euclidean space $\mathbb{R}^n$ equipped with the usual scalar product $\langle \cdot, \cdot \rangle$ and the corresponding norm $\| \cdot\|.$ For a nonempty set $X \subset \mathbb{R}^n,$ the closure,  convex hull and conic hull of $X$ are denoted, respectively, by $\mathrm{cl}\,X$,  $ \mathrm{co}\,X$, and $\mathrm{cone}\,X.$

Let  $e_i$, $i\in \{1, \ldots, n$\},  be the {\em  $i$-th unit vector} of $\R^n$. For a given subset $J\subset\{1, \ldots, n\}$, we denote by $\text{span}\,\{e_i\,:\, i\in J\}$ the subspace of $\R^n$ spanned by $\{e_i\,:\, i\in J\}$.  

For a set-valued mapping $F \colon \mathbb{R}^n \rightrightarrows \mathbb{R}^m$, the {\em Painlev\'e--Kuratowski outer limit} of $F$ at $\bar x\in\R^n$ is defined by
\begin{equation*}
	\Limsup_{x \to {\bar x}} F(x) := \{y \in \mathbb{R}^m \,:\, \exists x_k \to {\bar x}, \exists y_k \in F(x_k), y_k \to y\}.
\end{equation*}

\begin{definition}[{\normalfont see \cite{Aubin1990,Khan2015}}]\rm 
	Let $X\subset \R^n$ and $\bar x\in X$. 
	\begin{enumerate}[\rm(i)]
		\item The {\em  contingent cone} (or the {\em   Bouligand-Severi tangent cone}) $T_X(\bar x)$ to $X$ at $\bar x$ is defined by
		\begin{equation*}
			T_X(\bar x):=\big\{u\in\R^n\,:\, \exists t_k\to  0^+, \exists u_k\to u, \bar x+t_ku_k\in X \ \ \forall k\in\N\big\}.
		\end{equation*}
		
		\item  The {\em  Clarke tangent cone} $T^C_X(\bar x)$  to $X$ at $\bar x$ is given by
		\begin{equation*}
			T^C_X(\bar x):=\big\{u\in\R^n:\forall x_k\xrightarrow{X}\bar x,  \forall t_k\to  0^+, \exists u_k\to u,  x_k+t_ku_k\in X \ \ \forall k\in\N\big\}.
		\end{equation*}
		\item  The {\em  radial tangent cone} $\mathcal{R}_X(\bar x)$  to $X$ at $\bar x$ is defined by
		\begin{equation*}
			\mathcal{R}_X(\bar x):=\big\{u\in\R^n \,:\, \exists\tau>0,   \bar x+t u\in X \ \ \forall  t\in [0, \tau]\big\}.
		\end{equation*}
	\end{enumerate}	
\end{definition}

The following remark summarize some properties of tangent cones that can be found in  \cite{Bonnans-Shapiro,Borwein2005,Khan2015,Rockafellar1998}.

\begin{remark}\rm 
	\begin{enumerate}[(i)]
		\item The contingent and Clarke tangent cones are closed, whereas it can happen that the radial cone is not closed; see Example \ref{Example-tangent}(ii) below. The Clarke tangent cone is always convex. 
		
		\item For $X_1, \ldots, X_p \subset\R^n$ and $\bar x\in X:=\bigcap_{i=1}^pX_i$, it holds that
		\begin{equation*}
			T_X(\bar x) \subset \bigcap_{i=1}^p T_{X_i}(\bar x).
		\end{equation*}  
		
		\item The following inclusions hold:
		\begin{equation*}
			\mathcal{R}_X(\bar x) \subset T_X(\bar x), \  \   T^C_X(\bar x)\subset T_X(\bar x),
		\end{equation*}
		and when $X$ is convex, then $\mathcal{R}_X(\bar x)=\mathrm{cone}(X-\bar x)$ and
		\begin{equation*}
			T^C_X(\bar x)= T_X(\bar x)=\mathrm{cl}\,\mathcal{R}_X(\bar x).
		\end{equation*}

	\end{enumerate}
\end{remark}

The next example shows  that there is no inclusion relation between $\mathcal{R}_X(\bar x)$ and $T^C_X(\bar x)$.
\begin{example}\label{Example-tangent}\rm 
	\begin{enumerate}[(i)]
		\item Let $X=(\R_+\times\{0\})\cup \{0\}\times\R_+$ and $\bar x=(0,0)$. Then, it is easy to check that $\mathcal{R}_X(\bar x)=X$ and $T^C_X(\bar x)=\{(0,0)\}$ and so $$\mathcal{R}_X(\bar x)\nsubseteq T^C_X(\bar x).\footnote{This shows that Remark 2.1(i) in \cite{ChenJW-et-at-2026} is not correct.}$$
		
		\item Let $X=\{x=(x_1, x_2)\in\R^2\,:\, x^2_1\leq x_2\}$ and $\bar x=(0,0)$. Then $X$ is convex, 
		\begin{equation*}
			\mathcal{R}_X(\bar x)=\mathrm{cone}(X-\bar x)=\{(x_1, x_2)\in\R^2\,:\, x_2>0\}\cup\{(0,0)\},
		\end{equation*}	
		and
		\begin{equation*}
			T^C_X(\bar x)=T_X(\bar x)=\mathrm{cl}\,\mathcal{R}_X(\bar x)=\{(x_1, x_2)\in\R^2\,:\, x_2\geq 0\}.\footnote{This shows that \cite[Lemma 3.2]{ChenJW-et-at-2026}, and consequently \cite[Theorem 3.3(ii)]{ChenJW-et-at-2026}, appear to be incorrect.}
		\end{equation*}
		Hence $T^C_X(\bar x)\nsubseteq \mathcal{R}_X(\bar x)$.	
	\end{enumerate}	
	
\end{example} 

\begin{definition} [{\normalfont see \cite[pp. 5--6]{Mordukhovich_2018}}] 
	\rm	Let $X$ be a nonempty subset of $\mathbb{R}^n$ and $\bar x \in X$.
	\begin{enumerate}[\rm(i)]
		\item The \textit{Fr\'echet (regular) normal cone}  to $X$ at $\bar x$ is defined by
		\begin{align*}
			\widehat N_X(\bar x)=\Big\{ v\in \mathbb{R}^n : \limsup\limits_{x \xrightarrow{X}\bar x} \dfrac{\langle v, x-\bar x \rangle}{\|x-\bar x\|} \leq 0 \Big\},
		\end{align*}
		where $x \xrightarrow{X} \bar x$ means that $x \rightarrow \bar x$ and $ x\in X$.
		\item The \textit{limiting (Mordukhovich) normal cone} to $X$ at $\bar x$ is given by
		\begin{align*}
			N_X(\bar x)=\Limsup\limits_{ x \xrightarrow{X} \bar x} \widehat{N}_X(x).
		\end{align*}
		\item The \textit{Clarke normal cone} to $X$ at $\bar x$ is defined by $N^C_X(\bar x):=\mathrm{cl}\,\mathrm{co}\ N_X(\bar x)$.
	\end{enumerate}	
	
	We put $\widehat N_X(\bar x)=N_X(\bar x) =\emptyset$ if $\bar x \not\in X$.
\end{definition}	

By definition, it is clear that $\widehat N_X(\bar x) \subset N_X(\bar x)$  for all $\bar x \in X.$ If $X$ is convex, then the Fr\'echet normal cone, the limiting normal cone, and the normal cone in the sense of convex analysis coincide, i.e., 
\begin{align*}
	\widehat N_X(\bar x) = N_X(\bar x):=\{ v\in \mathbb{R}^n : \langle v,x- \bar x \rangle \le 0,  \ \forall x\in X\}.
\end{align*}

Let  $F: \mathbb{R}^n\rightrightarrows{\mathbb{R}^m}$ be a  multifunction with the \textit{domain} $${\rm{dom}}\, F:=\{ x \in \mathbb{R}^n : F(x)\not=\emptyset\}$$ and the \textit{graph} $${\rm{gph}}\, F:=\{ (x,y) \in \mathbb{R}^n \times \mathbb{R}^m : y \in F(x)\}.$$  

\begin{definition} [{\normalfont see~\cite[Definition 1.11]{Mordukhovich_2018}}]\rm  Let $  F: \mathbb{R}^n\rightrightarrows{\mathbb{R}^m}$ be a multifunction and  $(\bar x, \bar y) \in  {\rm{gph}}\, F$.	The  \textit{limiting (Mordukhovich) coderivative}  of  $F$ at $(\bar x, \bar y)$ is a multifunction $D^* F(\bar x, \bar y): \mathbb{R}^m \rightrightarrows{\mathbb{R}^n}$ with the values
	\begin{align*}
		D^* F(\bar x,\bar y)(v):=&\left\{u\in \mathbb{R}^n : (u, -v) \in  N_{\gph\, F}(\bar x, \bar y)\right\}, \quad v \in \mathbb{R}^m.
	\end{align*}
	If $(\bar x, \bar y) \notin {\rm{gph}}\, F$, one puts $ D^* F(\bar x, \bar y)(v)=\emptyset$ for any $v\in \mathbb{R}^m$. The symbol $D^* F(\bar x)$ is used when $F$ is single-valued at $\bar x$ and $\bar y=F(\bar x)$. 
\end{definition}

Consider a function $\varphi: \mathbb{R}^n\rightarrow \overline{\mathbb{R}}:= \mathbb{R} \cup \{+\infty\}$ with the \textit{effective domain} $$\mbox{dom}\, \varphi:=\{x \in \mathbb{R}^n : \varphi(x) < +\infty\},$$ the \textit{epigraph} $$ {\rm{epi}}\, \varphi:=\{ (x, \alpha) \in \mathbb{R}^n \times \mathbb{R} : \alpha \ge \varphi (x)\},$$
and the \textit{hypergraph} $$ {\rm{hypo}}\, \varphi:=\{ (x, \alpha) \in \mathbb{R}^n \times \mathbb{R} : \alpha \le \varphi (x)\}.$$  

\begin{definition}[{\normalfont see~\cite[Definition 1.77]{Mordukhovich_2006}}] \rm  
	Let $\bar {x} \in \mbox{dom}\, \varphi$. The  \textit{limiting (Mordukhovich) subdifferential} of $\varphi$ at $\bar {x}$ is defined by
	\begin{align*}
		\partial \varphi(\bar x):=\left\{v \in \mathbb{R}^n : (v, -1) \in  N_{\epi \varphi}(\bar {x}, \varphi(\bar{ x}))\right\}.
	\end{align*}
	If $\bar {x} \notin \mbox{dom}\, \varphi$, then we put  $\partial \varphi (\bar x):=\emptyset$.
\end{definition}

One can use the notion of coderivative to construct the second-order generalized differential theory of extended-real-valued functions.

\begin{definition}  [{\normalfont see \cite[Definition 3.17]{Mordukhovich_2018}}] {\rm Let $\varphi:\mathbb{R}^n\rightarrow \overline{\mathbb{R}}$ 	be a function and  $\bar{x}\in\mathrm{dom}\,\varphi.$ For any $\bar y\in  \partial \varphi (\bar x)$, the map $\partial^2 \varphi(\bar x,\bar y):\mathbb{R}^n\rightrightarrows \mathbb{R}^n$ with the values
		\begin{align*}
			\partial^2 \varphi(\bar x,\bar y)(v):=(D^* \partial \varphi)(\bar x,\bar y)(v)=\{u : (u,-v)\in N_{\gph \partial \varphi}(\bar x, \bar y)\}
		\end{align*} is said to be the \textit{limiting (Mordukhovich)
			second-order subdifferential} of $\varphi$ at $\bar x$ relative to	$\bar y.$}
\end{definition}

If $\partial \varphi (\bar x)$ is a singleton, the symbol $\bar y$ in the notation $\partial^2 \varphi (\bar x,\bar y)(v)$ will be omitted.  When $\varphi$ is a $C^2$-smooth function around $\bar x$, i.e., $\varphi$ is twice continuously differentiable in a neighborhood of $\ox$, then  
\begin{align*}
	\partial^2 \varphi(\bar x)(v)=\{\nabla^2 \varphi (\ox)^* v\}= \{\nabla^2 \varphi(\bar x)v\} \ \  v\in \mathbb{R}^n
\end{align*}
with $\nabla^2 \varphi (\ox)$ being the Hessian matrix of $\varphi$ at $\ox$; see, e.g., \cite[p. 124]{Mordukhovich_2018}.

\medskip
Let $U\subset\R^n$ be a nonempty, open, and convex set. We denote by  $C^{1,1}(U)$ the class of all real-valued functions $\varphi$, which are Fr\'echet differentiable on $U$, and whose gradient mapping $\nabla \varphi (\cdot)$ is locally Lipschitz on $U$. By \cite[Theorem 1.90]{Mordukhovich_2006}, if $\varphi\in C^{1,1}(U)$ and $\bar x\in U$, then one has
\begin{align*} 
	\partial^2 \varphi(\bar x) (v):= \partial^2 \varphi (\bar x, \nabla \varphi (\bar x))(v)=\partial \langle v, \nabla \varphi \rangle (\bar x)\ \ \forall v\in \mathbb{R}^n.
\end{align*}

We say that the function $\varphi$ is of {\em class $C^{1,1}$ around $\bar x$} if there exist an open neighborhood $U$ of $\bar x$ such that $\varphi\in C^{1,1}(U)$.

\medskip

The following properties can be obtained directly from the definition.
\begin{proposition}\label{property}
	Let $\varphi$ be of class $C^{1,1}(U)$ and $\bar x\in U$. The following assertions hold:
	\begin{enumerate}[\rm (i)]
		\item $\partial^2 \varphi(\bar x)(\lambda v) = \lambda \partial^2 \varphi(\bar x)(v)$ for all $\lambda \ge 0$ and $v\in \mathbb{R}^n.$
		\item For any $v\in \mathbb{R}^n$ the mapping $x \mapsto \partial^2 \varphi(x) (v)$ is locally bounded. Moreover, if $x_k \to \bar x$, $x_k^* \to x^*$, $x_k^* \in \partial^2 \varphi(x_k)(v)$ for all $k\in \mathbb{N}$, then $x^*\in \partial^2 \varphi(\bar x)(v).$
	\end{enumerate} 
\end{proposition}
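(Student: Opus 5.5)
The plan is to reduce both assertions to properties of the limiting first-order subdifferential of the locally Lipschitz function $x\mapsto\langle v,\nabla\varphi(x)\rangle$. The bridge is the representation recalled above from \cite[Theorem 1.90]{Mordukhovich_2006}:
\begin{equation*}
\partial^2\varphi(x)(v)=\partial\langle v,\nabla\varphi\rangle(x)\quad\text{for all } x\in U,\ v\in\R^n .
\end{equation*}
Fix $v$ and set $\psi_v(x):=\langle v,\nabla\varphi(x)\rangle$. Since $\nabla\varphi$ is locally Lipschitz on $U$, so is $\psi_v$. Hence each $\partial^2\varphi(x)(v)$ is the limiting subdifferential of a locally Lipschitz function.

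For (i), first take $\lambda>0$. Then $\psi_{\lambda v}=\lambda\psi_v$, and positive homogeneity of the limiting subdifferential, $\partial(\lambda\psi)(x)=\lambda\partial\psi(x)$ for $\lambda>0$, gives the claim. This homogeneity follows from the definition: the map $(x,\alpha)\mapsto(x,\lambda\alpha)$ is a linear isomorphism from $\epi\psi$ onto $\epi(\lambda\psi)$, so it transports Fr\'echet and limiting normals accordingly. Next take $\lambda=0$. Then $\psi_0\equiv0$, so $\partial^2\varphi(\bar x)(0)=\{0\}$. On the other side, $0\cdot\partial^2\varphi(\bar x)(v)=\{0\}$ because $\partial\psi_v(\bar x)\neq\emptyset$; nonemptiness holds since $\psi_v$ is locally Lipschitz \cite{Mordukhovich_2006}.

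For (ii), local boundedness comes from the standard bound $\|\xi\|\le L$ for every $\xi\in\partial\psi(x)$, valid whenever $\psi$ is Lipschitz with constant $L$ near $x$. Take a neighborhood $V\subset U$ of $\bar x$ on which $\nabla\varphi$ is $L$-Lipschitz. Then $\psi_v$ is $L\|v\|$-Lipschitz on $V$. Hence $\partial^2\varphi(x)(v)\subset L\|v\|\,\mathbb{B}$ for all $x\in V$, where $\mathbb{B}$ is the closed unit ball.

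The closedness assertion is the robustness (closed-graph) property of the limiting subdifferential of a continuous function: if $x_k\to\bar x$ and $x_k^*\in\partial\psi_v(x_k)$ with $x_k^*\to x^*$, then $x^*\in\partial\psi_v(\bar x)$. I would prove it directly from the definitions. We have $(x_k^*,-1)\in N_{\epi\psi_v}(x_k,\psi_v(x_k))$, and by continuity of $\psi_v$ the base points converge to $(\bar x,\psi_v(\bar x))$. The set $\epi\psi_v$ is locally closed, so the limiting normal cone has closed graph. The latter follows by a diagonal argument: each limiting normal is approximated by Fr\'echet normals at nearby points, and these approximations can be combined into a single sequence converging to the limit point. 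This gives $(x^*,-1)\in N_{\epi\psi_v}(\bar x,\psi_v(\bar x))$, that is, $x^*\in\partial^2\varphi(\bar x)(v)$.

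The main obstacle is exactly this diagonal argument, which is the only non-formal step. If preferred, one can instead cite \cite[Theorem 1.90]{Mordukhovich_2006} for the representation and the standard robustness of $N$ from \cite{Mordukhovich_2006} for the closed graph.
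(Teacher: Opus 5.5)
Your proposal is correct and is essentially what the paper intends. The paper gives no argument beyond saying the properties follow ``directly from the definition.'' Your reduction fills this in in the standard way: you pass to the locally Lipschitz scalarization $\psi_v=\langle v,\nabla\varphi\rangle$ via the representation $\partial^2\varphi(x)(v)=\partial\langle v,\nabla\varphi\rangle(x)$ recalled just before the proposition, and then use positive homogeneity, the bound $\partial\psi_v(x)\subset L\|v\|\mathbb{B}$, and robustness of the limiting normal cone. You also correctly treat the case $\lambda=0$ separately, which the paper glosses over. That case needs nonemptiness of $\partial^2\varphi(\bar x)(v)$ and $\partial^2\varphi(\bar x)(0)=\{0\}$, so it relies on the Lipschitz property and not merely on the conic structure of the normal cone.
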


\medskip

The Taylor formula for $C^{1,1}$ functions involving the limiting second-order subdifferential plays a crucial role in our study.
\begin{theorem} [{\normalfont see~\cite[Theorem 3.1]{Feng_Li_2020}}]\label{Taylor_formula}
	Let $\varphi$ be of class $C^{1,1}(U)$  and $a, b\in U$. Then, there exist $z\in \partial^2 \varphi(x) (b-a)$ and $z'\in \partial^2 \varphi(x') (b-a)$ for some $x$ and $x' \in [a,b]$  such~that
	$$ \dfrac{1}{2} \langle z', b-a \rangle \le \varphi(b)-\varphi(a)-\langle\nabla \varphi(a), b-a \rangle \le \dfrac{1}{2}\langle z, b-a \rangle.$$
\end{theorem}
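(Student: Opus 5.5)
The statement is the Taylor-type formula for $C^{1,1}$ functions in Theorem \ref{Taylor_formula}. My plan is to reduce it to a one-dimensional problem along the segment $[a,b]$ and then use a mean-value inequality for Lipschitz functions of one variable.

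Set $u:=b-a$ and $g(t):=\varphi(a+tu)$ for $t$ in an open interval containing $[0,1]$; this interval exists because $U$ is open and convex. Then $g'(t)=\langle\nabla\varphi(a+tu),u\rangle$, and $g'$ is Lipschitz on $[0,1]$, since $\nabla\varphi$ is locally Lipschitz and $[a,b]$ is compact. The quantity to be estimated is
$$R:=\varphi(b)-\varphi(a)-\langle\nabla\varphi(a),u\rangle=g(1)-g(0)-g'(0).$$

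Next I introduce the auxiliary function $h(t):=g(t)-g(0)-tg'(0)-t^2R$. It satisfies $h(0)=h(1)=0$ and $h'(0)=0$, and $h$ is $C^{1,1}$ with $h'(t)=g'(t)-g'(0)-2tR$. By Rolle's theorem there is $\tau\in(0,1)$ with $h'(\tau)=0$. Now $h'$ is Lipschitz on $[0,\tau]$ and vanishes at both endpoints. Lebourg's mean value theorem for the Clarke subdifferential gives a point $s\in(0,\tau)$ with $0\in\partial^C h'(s)$. Since the Clarke subdifferential of a Lipschitz function of one variable is the closed interval between its extreme limiting subgradients, some element of $\partial h'(s)\cup\partial(-h')(s)$ is $\ge0$ and some element is $\le0$. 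I need the two sign-controlled inequalities separately, so I proceed more carefully:
\begin{itemize}
\item Take a maximum point $s_1$ of $h'$ on $[0,\tau]$, or a minimum point if $h'\le0$ throughout. At an interior extremum, Fermat's rule gives $0\in\partial(-h')(s_1)$ at a maximizer and $0\in\partial h'(s_1)$ at a minimizer.
\item If $h'\equiv0$ on $[0,\tau]$, any interior point works.
\item If the extremum occurs only at an endpoint, $h'$ is zero there, so I pick the opposite extremum, which is then interior.
\end{itemize}

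Then I translate back to $\varphi$. The sum rule with the smooth term $-2tR$ gives $\partial h'(s)=\partial g'(s)-2R$. Moreover, $g'(t)=\langle u,\nabla\varphi\rangle(a+tu)$, so the chain rule for limiting subdifferentials of Lipschitz functions composed with an affine map gives
$$\partial g'(s)\subset\{\langle z,u\rangle : z\in\partial\langle u,\nabla\varphi\rangle(a+su)\}=\{\langle z,u\rangle : z\in\partial^2\varphi(a+su)(u)\},$$
where the equality is \cite[Theorem 1.90]{Mordukhovich_2006}.

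To get each inequality, I exploit the freedom to choose a maximizer or a minimizer:
\begin{itemize}
\item Obtaining $0\in\partial h'(s)$ gives some $z$ with $\langle z,u\rangle=2R$, which yields both inequalities at once.
\item Obtaining only $0\in\partial(-h')(s)$ gives $2R\in-\partial(-g')(s)$, which lies in the Clarke interval $[\min\partial g'(s),\max\partial g'(s)]$.
\end{itemize}
Because the extremes of that interval belong to $\partial g'(s)$, I can pick $z,z'\in\partial^2\varphi(x)(u)$ with $\tfrac12\langle z',u\rangle\le R\le\tfrac12\langle z,u\rangle$ and $x=x'=a+su$. This is allowed, since the statement lets $x$ and $x'$ be arbitrary points of $[a,b]$.

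The main obstacle is this last step. One must justify that the endpoints of the one-dimensional Clarke subdifferential are limiting subgradients of $g'$, and that the chain rule inclusion holds for the limiting subdifferential under an affine map that need not be surjective. For the first point I would use $\partial^C=\mathrm{co}\,\partial$ for Lipschitz functions together with the compactness of $\partial g'(s)$. For the second I would use the inclusion form of the chain rule, in which only the inclusion direction is needed, and that holds for Lipschitz outer functions.
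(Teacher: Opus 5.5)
Your proof is correct. The paper gives no argument of its own for this statement; it is quoted from \cite[Theorem 3.1]{Feng_Li_2020}, so there is no internal proof to compare yours with. What you have written is the classical generalized-Rolle argument used for Taylor expansions with Clarke's generalized Hessian, adapted to limiting subgradients.

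The tools you rely on are standard and correctly used:
\begin{itemize}
\item Rolle's theorem for $h$, and a mean value theorem for the Lipschitz function $h'$ on $[0,\tau]$.
\item The identity $\partial^C=\mathrm{co}\,\partial$ for locally Lipschitz functions, together with compactness of $\partial g'(s)$.
\item The limiting chain-rule inclusion for $\psi\circ A$ with $\psi=\langle u,\nabla\varphi\rangle$ and $A(t)=a+tu$. It holds without surjectivity of $A$: since $\psi$ is Lipschitz, its singular subdifferential is $\{0\}$, so the qualification condition in, e.g., \cite[Theorem 10.6]{Rockafellar1998} is automatic.
\item The scalarization formula \cite[Theorem 1.90]{Mordukhovich_2006}.
\end{itemize}

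Two remarks.

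First, your maximizer/minimizer case split can be dropped. Lebourg's theorem already gives
\[
0\in\partial^C h'(s)=\partial^C g'(s)-2R=[\min\partial g'(s),\max\partial g'(s)]-2R,
\]
that is, $\min\partial g'(s)\le 2R\le\max\partial g'(s)$. Both endpoints are genuine limiting subgradients, and this is exactly what your last step uses. Your first attempt at this, phrased through $\partial h'(s)\cup\partial(-h')(s)$, was weaker than what was available: $0\in[\min\partial h'(s),\max\partial h'(s)]$ already puts a nonnegative and a nonpositive element in $\partial h'(s)$ itself.

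Second, your argument gives both bounds at the same point $x=x'=a+su$ with $s\in(0,1)$. This is slightly sharper than the two-point form quoted in the paper.
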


\section{Second-Order Necessary Optimality Conditions}\label{Section 3} 
Consider the following sparse optimization problem
\begin{equation}\label{Problem}
	\min f(x)\ \ \text{s.t.}\ \ x\in\Omega:=\{x\in\mathbb{R}^n\,:\,\phi(x)\in D,   \|x\|_0\leq s\},\tag{SP}	
\end{equation}
where $f\colon\R^n\to \R$, $\phi_j\colon\R^n\to\R$, $j=1, \ldots, m$,  are of class $C^1$ functions, $D\subset\R^m$ is a  closed and convex cone, and $s\in (0,n)$ is any given positive
integer. Let us denote $\Phi$  and $\mathcal{S}$, respectively, by
\begin{equation*}
	\Phi:=\phi^{-1}(D) \ \ \text{and}\ \ \mathcal{S}:=\{x\in\R^n\,:\, \|x\|_0\leq s\}. 
\end{equation*}
Then $\Omega=\Phi\cap   \mathcal{S}$ and the sparse constraint set $\mathcal{S}$ can be formulated as
\begin{equation*}
	\mathcal{S}=\bigcup_{J\in\mathcal{J}} \R^n_J,
\end{equation*}
where $\mathcal{J}:=\{J\subset \{1, \ldots, n\}\,:\, |J|=s\}$ and $$\R^n_J:=\mathrm{span}\{e_i\,:\, i\in J\}.$$

For $x\in \mathcal{S}$, the support index set $\Gamma(x)$ of $x$ is defined by
\begin{equation*}
	\Gamma(x):=\{i\in \{1, \ldots, n\}\,:\, x_i\neq 0\}.
\end{equation*} 
For a subset $I$ of $\{1, \ldots, n\}$ and $x\in\R^n$, we denote $x_I:=(x_i)_{i\in I}$ and the cardinality of $I$ by $|I|$. The complement  of $I$ is denoted by $I^c$.

\medskip

For $\bar x\in \Omega$, we denote {\em  hereafter} $\bar J:=\Gamma(\bar x)$ and $\mathcal{J}(\bar x):=\{J\in\mathcal{J}\,:\, J\supset \bar J\}$.

\begin{definition}[{\normalfont see \cite[Definition 3.1]{Pan-Luo-Xiu_2017}}] \rm 
	Let $0\neq \bar x\in \Omega$  and $\bar \lambda\in N_D(\phi(\bar x))$. We say that:
	\begin{enumerate}[\rm(i)]
		\item  The {\em  Robinson's constraint qualification} (RCQ) for $\Phi$ holds at $\bar x$ if 
		\begin{equation*}
			0\in\mathrm{int}\,\{\phi(\bar x)+\nabla \phi(\bar x)\R^n-D\}.
		\end{equation*}
		\item The {\em  restricted Robinson's constraint qualification} (RRCQ) holds at $\bar x$ if
		\begin{equation*}
			0\in\mathrm{int}\,\{\phi(\bar x)+\nabla \phi(\bar x)\R^n_{\bar J}-D\}.
		\end{equation*}
		\item The  {\em  restricted strict Robinson's constraint qualification} (RSRCQ) holds at the pair $(\bar x, \bar\lambda)$ if
		\begin{equation*}
			0\in\mathrm{int}\,\{\phi(\bar x)+\nabla \phi(\bar x)\R^n_{\bar J}-D_0\},
		\end{equation*}
		where $D_0:=\{y\in D\,:\, \langle\bar\lambda, y-\phi(\bar x)\rangle=0\}$.
	\end{enumerate}	
\end{definition}

\begin{remark}\rm 
	\begin{enumerate}[\rm(i)]
		\item The following implications hold:
		\begin{equation*}
			\text{RSRCQ} \Rightarrow \text{RRCQ} \Rightarrow \text{RCQ}.
		\end{equation*} 
		\item  (see \cite[Corollary 2.91]{Bonnans-Shapiro}) If the RCQ holds for $\Phi$ at $\bar x$, then
		\begin{equation*}
			T_\Phi(\bar x)=\{d\in\R^n\,:\, \nabla \phi(\bar x)d\in T_D(\phi(\bar x))\}.
		\end{equation*}
		
		\item (see \cite[p. 150]{Bonnans-Shapiro}) Let $\bar x\in \Phi$. Then, one has 
		$$N_D(\phi(\bar x))=\{\lambda\in\R^m\,:\, \lambda\in D^*, \langle\lambda, \phi(\bar x)\rangle=0\},$$
		where $D^*:=\{y^*\in\R^m\,:\, \langle y^*, y\rangle\leq 0 \ \ \forall y\in D\}$ is the dual cone of $D$.
	\end{enumerate}	
	
\end{remark}

\begin{lemma}[{\normalfont see \cite[Proposition 3.1]{Pan-Luo-Xiu_2017}}]\label{Inter-tangent} Let $\bar x\in \Omega$ be such that $\bar x\neq 0$. If the RRCQ holds at $\bar x$, then
	\begin{equation*}
		T_\Omega(\bar x)= T_\Phi(\bar x)\cap T_\mathcal{S}(\bar x)=(\nabla \phi(\bar x))^{-1}(T_D(\phi(\bar x)))\cap T_\mathcal{S}(\bar x).
	\end{equation*}
\end{lemma}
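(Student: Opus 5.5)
The inclusion $T_\Omega(\bar x)\subset T_\Phi(\bar x)\cap T_\mathcal{S}(\bar x)$ is immediate from the intersection property of contingent cones recalled in the Remark. Since RRCQ implies RCQ, the cited formula from \cite[Corollary 2.91]{Bonnans-Shapiro} gives $T_\Phi(\bar x)=(\nabla\phi(\bar x))^{-1}(T_D(\phi(\bar x)))$, which is the second equality. The whole task is therefore the reverse inclusion $T_\Phi(\bar x)\cap T_\mathcal{S}(\bar x)\subset T_\Omega(\bar x)$.

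First I will describe $T_\mathcal{S}(\bar x)$ explicitly. Because $\mathcal{S}$ is a finite union of subspaces $\R^n_J$, and only those with $J\supset\bar J$ contain points near $\bar x$, we have
\begin{equation*}
T_\mathcal{S}(\bar x)=\bigcup_{J\in\mathcal{J}(\bar x)}\R^n_J .
\end{equation*}
This holds because $\bar x+t_ku_k\in\mathcal{S}$ with $t_k\to0^+$ forces, after passing to a subsequence, $\bar x+t_ku_k\in\R^n_J$ for a fixed $J$. That $J$ must contain $\bar J$, since the components of $\bar x$ indexed by $\bar J$ are nonzero, and then $u\in\R^n_J$. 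So take $d\in T_\Phi(\bar x)$ with $d\in\R^n_J$ for some $J\in\mathcal{J}(\bar x)$. It suffices to show $d\in T_{\Phi\cap\R^n_J}(\bar x)$, because $\Phi\cap\R^n_J\subset\Omega$.

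Now apply the tangent-cone formula for the constraint system $\{x\in\R^n_J:\phi(x)\in D\}$. Restricting $\phi$ to the subspace $\R^n_J$, the corresponding Robinson condition reads
\begin{equation*}
0\in\mathrm{int}\,\{\phi(\bar x)+\nabla\phi(\bar x)\R^n_J-D\},
\end{equation*}
and it follows from RRCQ since $\R^n_{\bar J}\subset\R^n_J$. Applying \cite[Corollary 2.91]{Bonnans-Shapiro} to the map $\phi|_{\R^n_J}$ on the space $\R^n_J\cong\R^{s}$ then gives
\begin{equation*}
T_{\Phi\cap\R^n_J}(\bar x)=\{d\in\R^n_J:\nabla\phi(\bar x)d\in T_D(\phi(\bar x))\}.
\end{equation*}
Our $d$ belongs to this set, since $d\in\R^n_J$ and $\nabla\phi(\bar x)d\in T_D(\phi(\bar x))$. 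Hence $d\in T_\Omega(\bar x)$, which proves the equality.

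The main obstacle is justifying the restriction step cleanly. I would identify $\R^n_J$ with $\R^{|J|}$ via the coordinates $x_J$ and set $\psi(y):=\phi(\iota(y))$, where $\iota$ is the canonical embedding. Then $\nabla\psi(\bar x_J)\R^{|J|}=\nabla\phi(\bar x)\R^n_J$, so RCQ for $\psi$ at $\bar x_J$ is implied by RRCQ, and the tangent cone of $\psi^{-1}(D)$ maps onto $T_{\Phi\cap\R^n_J}(\bar x)$ under $\iota$. The hypothesis $\bar x\neq0$ plays no essential role here beyond guaranteeing that $\bar J$ is nonempty.
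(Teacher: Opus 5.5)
Your argument is correct and complete. The paper gives no proof of this lemma; it is imported from Pan--Luo--Xiu \cite{Pan-Luo-Xiu_2017}. What you have written is therefore a self-contained derivation of the cited result, built from facts the paper already records: the Bonnans--Shapiro tangent-cone formula in the Remark after the definition of RRCQ, and the description of $T_\mathcal{S}(\bar x)$ in Lemma~\ref{Lemma-tan-nor}.

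Your key step has three parts: choose $J\in\mathcal{J}(\bar x)$ with $d\in\R^n_J$, transfer Robinson's condition to $\phi\circ\iota$ on $\R^n_J$, and use $\Phi\cap\R^n_J\subset\Omega$. This is the natural mechanism behind the result, and the identification via $\iota$ is handled correctly.

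Your route also shows what is really needed. The inclusion $T_\Phi(\bar x)\subset(\nabla\phi(\bar x))^{-1}(T_D(\phi(\bar x)))$ holds without any qualification. Hence the whole chain of equalities only requires $0\in\mathrm{int}\,\{\phi(\bar x)+\nabla\phi(\bar x)\R^n_J-D\}$ for every $J\in\mathcal{J}(\bar x)$. This coincides with RRCQ when $|\bar J|=s$, but can be strictly weaker when $|\bar J|<s$.

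One small correction to your closing remark: nonemptiness of $\bar J$ is never used. The argument runs verbatim at $\bar x=0$, where RRCQ reads $\phi(0)\in\mathrm{int}\,D$. So the hypothesis $\bar x\neq 0$ is simply inherited from the source.
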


\begin{lemma}[{\normalfont see \cite[Theorems 2.1--2.2]{Pan-Xiu-Zhou}}]\label{Lemma-tan-nor} For any $\bar x\in\mathcal{S}$, we have:
	\begin{enumerate}[\rm(i)]
		\item $T^C_\mathcal{S}(\bar x)=\R^n_{\bar J}$ and  $N^C_\mathcal{S}(\bar x)=\R^n_{{\bar J}^c}$;
		\item $T_\mathcal{S}(\bar x)=\bigcup\{\R^n_J\,:\, J\supseteq \bar J, |J|= s\}$,  
		\begin{equation*}
			\widehat{N}_\mathcal{S}(\bar x)=
			\begin{cases}
				\R^n_{{\bar J}^c}, \ \ &\text{if}\ \ |\bar J|=s,
				\\
				\{0_{\R^n}\}, \ \ &\text{if}\ \ |\bar J|<s, 
			\end{cases}
		\end{equation*}
		and
		\begin{equation*}
			{N}_\mathcal{S}(\bar x)=
			\begin{cases}
				\R^n_{{\bar J}^c}, \ \ &\text{if}\ \ |\bar J|=s,
				\\
				\bigcup\{\R^n_{J^c}\,:\, J\supseteq \bar J, |J|= s\}, \ \ &\text{if}\ \ |\bar J|<s. 
			\end{cases}
		\end{equation*}
		
	\end{enumerate} 
	
\end{lemma}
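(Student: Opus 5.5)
We prove the two parts of Lemma \ref{Lemma-tan-nor} from the decomposition $\mathcal{S}=\bigcup_{J\in\mathcal{J}}\R^n_J$: it is a finite union of subspaces, and near $\bar x$ only those $\R^n_J$ with $J\supseteq\bar J$ matter. Indeed, if $\bar x\in \R^n_J$ then $J\supseteq\bar J$. The subspaces with $J\not\supseteq \bar J$ are closed and miss $\bar x$, so they also miss a neighborhood $V$ of $\bar x$. Consequently
\begin{equation*}
\mathcal{S}\cap V=\bigcup_{J\in\mathcal{J}(\bar x)}\R^n_J\cap V ,
\end{equation*}
and every tangent or normal cone at $\bar x$ can be computed from this local union. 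One also notes that $\bigcap_{J\in\mathcal{J}(\bar x)}J=\bar J$: if $|\bar J|<s$, any index outside $\bar J$ can be avoided by choosing $J$ appropriately, since $s<n$.

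For the contingent cone, a union of closed cones containing $\bar x$ has contingent cone equal to the union of the individual contingent cones. Here that union is $\bigcup_{J\in\mathcal{J}(\bar x)}\R^n_J$, because for a subspace $\R^n_J\ni\bar x$ the contingent cone is $\R^n_J$ itself. The inclusion "$\subseteq$" follows by passing to a subsequence along which $\bar x+t_ku_k$ stays in a single $\R^n_J$, which is possible since there are finitely many $J$. For the Fréchet normal cone, polarity gives $\widehat N_\mathcal{S}(\bar x)=T_\mathcal{S}(\bar x)^\circ=\bigcap_{J\in\mathcal{J}(\bar x)}\R^n_{J^c}=\R^n_{(\bigcup J)^c}$. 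If $|\bar J|=s$, then $\mathcal{J}(\bar x)=\{\bar J\}$ and this gives $\R^n_{\bar J^c}$. If $|\bar J|<s$, then $\bigcup_{J\in\mathcal{J}(\bar x)} J=\{1,\dots,n\}$, since every index can be added to $\bar J$; hence the cone is $\{0\}$.

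For the limiting normal cone, take $x_k\to\bar x$ in $\mathcal{S}$ and $v_k\in\widehat N_\mathcal{S}(x_k)$ with $v_k\to v$. Nonzero Fréchet normals only occur at points with $|\Gamma(x_k)|=s$, where $v_k\in\R^n_{\Gamma(x_k)^c}$. For $k$ large, $\Gamma(x_k)\supseteq\bar J$. Passing to a subsequence, we may take $\Gamma(x_k)=J$ constant, so $v\in\R^n_{J^c}$ with $J\in\mathcal{J}(\bar x)$. Conversely, given $J\in\mathcal{J}(\bar x)$, we choose $x_k=\bar x+\frac1k\sum_{i\in J\setminus\bar J}e_i$. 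Then $\Gamma(x_k)=J$ has cardinality $s$, so $\R^n_{J^c}=\widehat N_\mathcal{S}(x_k)$, and every element of $\R^n_{J^c}$ is a limiting normal. When $|\bar J|=s$ this yields $\R^n_{\bar J^c}$, and in general it yields the stated union.

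For part (i), $N^C_\mathcal{S}(\bar x)$ is the closed convex hull of $\bigcup_{J\in\mathcal{J}(\bar x)}\R^n_{J^c}$, which is the span of the $e_i$ with $i\in J^c$ for some such $J$. These are exactly the indices $i\notin\bigcap J=\bar J$, so $N^C_\mathcal{S}(\bar x)=\R^n_{\bar J^c}$. The Clarke tangent cone is the polar of the Clarke normal cone in finite dimensions, which gives $T^C_\mathcal{S}(\bar x)=\R^n_{\bar J}$. Alternatively, one can verify directly that $\R^n_{\bar J}\subseteq T^C$: for $x_k\to\bar x$ in $\mathcal{S}$ we have $\Gamma(x_k)\supseteq\bar J$ eventually, so adding $t_ku$ with $u\in\R^n_{\bar J}$ keeps the support unchanged. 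The reverse inclusion follows by taking $x_k$ supported on a $J$ avoiding a given index $i\notin\bar J$. The main point requiring care is the limiting normal computation, in particular the subsequence argument that stabilizes the support $\Gamma(x_k)$.
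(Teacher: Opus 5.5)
Your proof is correct. The paper itself gives no proof of this lemma; it simply quotes Pan--Xiu--Zhou. So your argument is a self-contained replacement rather than a reconstruction of an in-paper proof.

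Your route rests on two ingredients:
\begin{enumerate}[(a)]
\item the local identity $\mathcal S\cap V=\bigcup_{J\in\mathcal J(\bar x)}\R^n_J\cap V$;
\item the finite-dimensional polarity relations $\widehat N_{\mathcal S}(\bar x)=T_{\mathcal S}(\bar x)^\circ$ and, for the closed set $\mathcal S$, $T^C_{\mathcal S}(\bar x)=N_{\mathcal S}(\bar x)^\circ=(N^C_{\mathcal S}(\bar x))^\circ$ (see \cite[Ch.~6]{Rockafellar1998}).
\end{enumerate}
In particular, you get the Clarke cones from the limiting normal cone rather than by a direct sequential computation. This is economical, and it isolates the key combinatorial fact $\bigcap_{J\in\mathcal J(\bar x)}J=\bar J$, which is exactly where $s<n$ enters. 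Your argument also covers $\bar x=0$ with the convention $\R^n_{\emptyset}=\{0\}$, as the statement requires.

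There are three small points worth tightening.
\begin{enumerate}[(1)]
\item In the limiting normal step, say explicitly what happens if $|\Gamma(x_k)|<s$ along a subsequence. Then $v_k=0$ there, so $v=0$, which lies in every $\R^n_{J^c}$. Equivalently, note that in all cases $v_k\in\R^n_{J_k^c}$ for some $J_k\in\mathcal J(\bar x)$, and then pass to a subsequence with $J_k$ constant.
\item In your alternative direct proof of $T^C_{\mathcal S}(\bar x)\subseteq\R^n_{\bar J}$, the step sizes must be small relative to the new entries of $x_k$. For example, take $x_k=\bar x+\frac1k\sum_{j\in J\setminus\bar J}e_j$ and $t_k=1/k^2$. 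Then $x_k+t_ku_k$ keeps all $s$ entries indexed by $J$ nonzero while acquiring a nonzero $i$-th entry, so it leaves $\mathcal S$.
\item In the direct proof of $\R^n_{\bar J}\subseteq T^C_{\mathcal S}(\bar x)$, adding $t_ku$ with $u\in\R^n_{\bar J}$ need not keep the support literally unchanged. It does keep it inside $\Gamma(x_k)$, and that is all you need; take $u_k=0$ for the finitely many early indices.
\end{enumerate}
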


The Lagrangian associated with  problem \eqref{Problem} is defined by
\begin{equation*}
	L(x, \lambda):=f(x)+\langle \lambda, \phi(x)\rangle \ \ \forall (x, \lambda)\in\R^n\times\R^m.
\end{equation*}
\begin{definition}[{\normalfont see \cite{Pan-Luo-Xiu_2017}}]\rm  Let $\bar x\in\Omega$. The sets of Lagrange multipliers in the sense of Fr\'echet and Clarke are defined, respectively, by
	\begin{align*}
		\Lambda^S(\bar x)&:=\{\lambda\in \R^m_+\,:\, -\nabla L_x(\bar x, \lambda)\in \widehat{N}_\mathcal{S}(\bar x), \lambda\in N_D(\phi(\bar x))\},
		\\
		\Lambda^C(\bar x)&:=\{\lambda\in \R^m_+\,:\, -\nabla L_x(\bar x, \lambda)\in {N}^C_\mathcal{S}(\bar x), \lambda\in N_D(\phi(\bar x))\}.
	\end{align*} 
\end{definition}

\begin{definition}\rm    Let $\bar x\in\Omega$. We say that:
	\begin{enumerate}[\rm(i)]
		\item $\bar x$ is a {\em  local optimal solution} to problem \eqref{Problem} if there exists a neighborhood $U$ of $\bar x$ such that $f(x)\geq f(\bar x)$ for all $x\in\Omega\cap U$. 
		
		\item (see \cite[Definition 2.1]{Auslender_1984}) $\bar x$ is an \textit{isolated local solution of order~2}  to problem \eqref{Problem} if there exist a  constant $c>0$ and a neighborhood $U$ of $\bar x$ such that
		\begin{equation*} 
			f (x)\geq  f (\bar x)  +\frac{1}{2}\, c\, \|x-\bar x\|^2\ \ \forall x\in \Omega \cap U.
		\end{equation*}
	\end{enumerate}	
	
\end{definition}

\begin{lemma}[{\normalfont  see \cite[Theorem 4.1]{Pan-Luo-Xiu_2017}}]\label{Lemma-Lagrange} Let $\bar x\neq 0$ be a local optimal solution to problem \eqref{Problem} and $\bar\lambda\in N_D(\phi(\bar x))$. 
	\begin{enumerate}[\rm(i)]
		\item If the RRCQ holds at $\bar x$, then $\Lambda^C(\bar x)$ is a nonempty, convex and compact set.
		\item If the RSRCQ holds at the pair $(\bar x, \bar\lambda)$, then $\Lambda^S(\bar x)$ is a singleton.
	\end{enumerate}
\end{lemma}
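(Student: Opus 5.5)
The statement is Lemma~\ref{Lemma-Lagrange}, which the paper cites from \cite[Theorem 4.1]{Pan-Luo-Xiu_2017}. I would prove it by reducing the sparse problem to a smooth problem on a fixed coordinate subspace, and then using the classical multiplier theory under Robinson's constraint qualification.

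\emph{Part (i).} Let $\bar x\neq 0$ be a local solution of \eqref{Problem}. Since $\R^n_{\bar J}\subset\mathcal{S}$, the point $\bar x$ is also a local solution of the restricted problem
\begin{equation*}
\min f(x)\ \ \text{s.t.}\ \ x\in\R^n_{\bar J},\ \phi(x)\in D .
\end{equation*}
Parametrize $\R^n_{\bar J}$ by $z=x_{\bar J}$, so the constraint becomes $\psi(z):=\phi(z)\in D$. Here $z$ is extended by zero off $\bar J$. Because $\nabla\psi(\bar z)\R^{|\bar J|}=\nabla\phi(\bar x)\R^n_{\bar J}$, the RRCQ at $\bar x$ is exactly Robinson's CQ for this reduced problem at $\bar z$. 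The standard first-order theory (Bonnans--Shapiro, Theorem 3.9) then gives a nonempty, convex, compact set of multipliers $\lambda\in N_D(\phi(\bar x))$ satisfying $\nabla_{\bar J}f(\bar x)+\nabla_{\bar J}\phi(\bar x)^{T}\lambda=0$.

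This last condition says that $(\nabla_x L(\bar x,\lambda))_{\bar J}=0$, that is, $-\nabla_x L(\bar x,\lambda)\in\R^n_{\bar J^c}=N^C_\mathcal{S}(\bar x)$ by Lemma~\ref{Lemma-tan-nor}(i). So the reduced multiplier set coincides with $\Lambda^C(\bar x)$, which proves (i). Convexity and closedness can also be seen directly, since $\Lambda^C(\bar x)$ is cut out by linear conditions intersected with the convex cone $N_D(\phi(\bar x))$. Boundedness is the usual consequence of Robinson's CQ. If $\lambda_k\in\Lambda^C$ with $\|\lambda_k\|\to\infty$, normalize and pass to the limit to get $\mu\neq0$ with $\mu\in N_D(\phi(\bar x))$ and $\nabla_{\bar J}\phi(\bar x)^{T}\mu=0$. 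This contradicts RRCQ in its dual form $\langle\mu,\phi(\bar x)+\nabla\phi(\bar x)d-y\rangle\ge0$ for all $d\in\R^n_{\bar J}$, $y\in D$, since that inequality forces $\mu=0$.

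\emph{Part (ii).} RSRCQ implies RRCQ, so $\Lambda^C(\bar x)\neq\emptyset$. Uniqueness is the classical consequence of the strict Robinson condition (Bonnans--Shapiro, Proposition 4.50). Suppose $\lambda_1,\lambda_2$ are both multipliers and set $\mu=\lambda_1-\lambda_2$. Then $\nabla_{\bar J}\phi(\bar x)^{T}\mu=0$. Also, for $y\in D_0$ one has $\langle\lambda_i,y\rangle\le0$ and $\langle\bar\lambda,y\rangle=\langle\bar\lambda,\phi(\bar x)\rangle=0$. Using the interior condition to write any small $w$ as $\phi(\bar x)+\nabla\phi(\bar x)d-y$, one gets $\langle\mu,w\rangle=0$ for all small $w$, hence $\mu=0$.

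\textbf{Main obstacle.} To make this argument work, $\bar\lambda$ must itself be one of the multipliers, and this is the step I expect to be delicate. The right-hand-side sign argument needs $\lambda_i$ to vanish on $D_0-\phi(\bar x)$ as well as on the directions coming from $\R^n_{\bar J}$. That holds when $\bar\lambda$ is taken from the multiplier set, which is the intended reading of the statement.

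It then remains to pass from $\Lambda^C(\bar x)$ to $\Lambda^S(\bar x)$. When $|\bar J|=s$ the two sets coincide, since $\widehat N_\mathcal{S}(\bar x)=N^C_\mathcal{S}(\bar x)$ in that case. When $|\bar J|<s$, we have $\widehat N_\mathcal{S}(\bar x)=\{0\}$, so one must additionally show that the gradient of the Lagrangian vanishes off $\bar J$. I would do this by applying the same reduction on each $\R^n_J$ with $J\in\mathcal{J}(\bar x)$: the point $\bar x$ is also locally optimal for the problem restricted to $\R^n_J$, and a multiplier for that problem restricted to the $\bar J$-coordinates is a multiplier for the $\bar J$-problem. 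By uniqueness it equals the single multiplier found above. Covering every index by some $J\supseteq\bar J$ with $|J|=s$ then gives $\nabla_x L(\bar x,\lambda)=0$, so the singleton lies in $\Lambda^S(\bar x)$.

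This covering step needs RSRCQ only along $\R^n_{\bar J}$, which suffices because $\R^n_{\bar J}\subset\R^n_J$ makes the constraint qualification hold on each larger subspace. It also requires $\bar\lambda$ to agree with that single multiplier, and I expect this to be the most delicate point of the whole argument.
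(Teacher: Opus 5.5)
The paper gives no proof of this lemma; it imports it from \cite[Theorem 4.1]{Pan-Luo-Xiu_2017}, so there is no in-paper argument to compare with. Your route is the standard one, and its steps check out, subject to the hypothesis discussed below. You restrict to $\R^n_{\bar J}$, where RRCQ is exactly Robinson's CQ, and apply the classical multiplier theory. For $|\bar J|<s$ you redo the restriction on each $\R^n_J$, $J\in\mathcal{J}(\bar x)$, and glue the results using uniqueness. In detail:
\begin{itemize}
\item $\Lambda^C(\bar x)$ is the closed convex cone $N_D(\phi(\bar x))$ intersected with an affine subspace.
\item Normalization gives boundedness.
\item The strict-Robinson argument gives $\langle\mu,w\rangle\ge 0$ on a ball, hence $\mu=0$.
\item The sets $J\in\mathcal{J}(\bar x)$ cover $\{1,\ldots,n\}$ when $|\bar J|<s$.
\end{itemize}

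What you call the main obstacle is not a step that can be proved. It is a hypothesis that the statement, as paraphrased here, omits, and without it (ii) is false. Take $n=2$, $s=1$, $f(x)=-x_1$, $\phi(x)=(x_1-1,x_1-1)$, $D=\R^2_-$, $\bar x=(1,0)$ and $\bar\lambda=0\in N_D(\phi(\bar x))$. Then $D_0=D$, so RSRCQ at $(\bar x,\bar\lambda)$ is just RRCQ. RRCQ holds because $\{(t,t)\,:\,t\in\R\}-\R^2_-=\R^2$. Yet $\Lambda^S(\bar x)=\Lambda^C(\bar x)=\{\lambda\in\R^2_+\,:\,\lambda_1+\lambda_2=1\}$.

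So you should assume explicitly that $\bar\lambda\in\Lambda^C(\bar x)$. This is the setting of the classical Bonnans--Shapiro uniqueness result you invoke. It is also how the paper uses the lemma in Theorem~\ref{Necessary-equality}(ii)--(iii), where $\bar\lambda\in\Lambda^S(\bar x)$. Once you do, nothing delicate remains:
\begin{itemize}
\item Your uniqueness argument gives $\Lambda^C(\bar x)=\{\bar\lambda\}$.
\item Every $\lambda_J$ in the covering step then equals $\bar\lambda$ automatically.
\item You obtain $\bar\lambda\in\Lambda^S(\bar x)$ as a conclusion. If you assume $\bar\lambda\in\Lambda^S(\bar x)$ from the start, the covering step is not needed at all, since $\Lambda^S(\bar x)\subseteq\Lambda^C(\bar x)$.
\end{itemize}

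Similarly, when you identify the reduced multiplier set with $\Lambda^C(\bar x)$, you silently drop the requirement $\lambda\in\R^m_+$ in the paper's definitions. That is the right reading, but say so. With that requirement, (i) already fails for $D=\{0\}$: for example, with $n=2$, $s=1$, $\phi(x)=x_1-1$, $f(x)=x_1$, the only multiplier is $-1$.
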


\subsection{Sparse Optimization  with Equality Constraints}
We first consider second-order optimality conditions for problem \eqref{Problem} with $D=\{0_{\R^m}\}$.
\begin{theorem}\label{Necessary-equality} Suppose that $\bar x\neq 0$ is a local optimal solution to \eqref{Problem} with $D=\{0_{\R^m}\}$,  and $f, \phi$ are of class $C^{1,1}$ around $\bar x$.
	\begin{enumerate} [\rm(i)]
		\item If the RRCQ holds at $\bar x$ and $\|\bar x\|_0=s$, then for each $u\in T_\Omega(\bar x)$ and $\bar \lambda\in \Lambda^C(\bar x)$, there exists   $z\in \partial^2 L(\bar x, \bar\lambda)(u)$ such that $\langle z, u\rangle\geq 0$.

		\item  If the RSRCQ holds at $\bar x$ and  $\bar \lambda$ is the unique Lagrange multiplier in $\Lambda^S(\bar x)$, then for each $u\in T_\Omega(\bar x)$, there exists   $z\in \partial^2 L(\bar x, \bar\lambda)(u)$ such that $\langle z, u\rangle\geq 0$. 
		
		\item  If $\bar x$ is an isolated local solution of order $2$ to \eqref{Problem} and the RSRCQ holds at $\bar x$ with respect to the unique Lagrange multiplier   $\bar \lambda\in\Lambda^S(\bar x)$, then for each $u\in T_\Omega(\bar x)\setminus\{0\}$, there exists   $z\in \partial^2 L(\bar x, \bar\lambda)(u)$ such that $\langle z, u\rangle> 0$. 
	\end{enumerate}	
	
\end{theorem}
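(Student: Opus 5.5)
Since $D=\{0\}$, every feasible $x$ satisfies $\phi(x)=0$, so $f(x)=L(x,\bar\lambda)$ on $\Omega$ for every $\lambda$. The plan is to compare $L(\cdot,\bar\lambda)$ along feasible sequences approaching $\bar x$ in direction $u$, and to expand it by the Taylor formula of Theorem \ref{Taylor_formula}. Fix $u\in T_\Omega(\bar x)$ and take $t_k\to0^+$, $u_k\to u$ with $x_k:=\bar x+t_ku_k\in\Omega$. By Lemma \ref{Inter-tangent} and Lemma \ref{Lemma-tan-nor}(ii), $u\in\R^n_J$ for some $J\supseteq\bar J$, $|J|=s$.

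The first-order term has to vanish, and this is where the two cases differ. In (i), $|\bar J|=s$, so $\mathcal{S}$ coincides near $\bar x$ with the subspace $\R^n_{\bar J}$. Hence $x_k\in\R^n_{\bar J}$ for large $k$ and $u_k\in\R^n_{\bar J}$. Since $-\nabla_xL(\bar x,\bar\lambda)\in N^C_\mathcal{S}(\bar x)=\R^n_{\bar J^c}$, we get $\langle\nabla_xL(\bar x,\bar\lambda),u_k\rangle=0$. In (ii)--(iii), $\bar\lambda\in\Lambda^S(\bar x)$. If $|\bar J|=s$ we argue as in (i). If $|\bar J|<s$, then $\widehat N_\mathcal{S}(\bar x)=\{0\}$, so $\nabla_xL(\bar x,\bar\lambda)=0$ outright. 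In every case $\langle\nabla_x L(\bar x,\bar\lambda),x_k-\bar x\rangle=0$.

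Now apply Theorem \ref{Taylor_formula} to $\varphi=L(\cdot,\bar\lambda)$, which is $C^{1,1}$ near $\bar x$, with $a=\bar x$ and $b=x_k$. It gives $\xi_k\in[\bar x,x_k]$ and $z_k\in\partial^2L(\xi_k,\bar\lambda)(t_ku_k)$ such that
\[
0\le f(x_k)-f(\bar x)=L(x_k,\bar\lambda)-L(\bar x,\bar\lambda)\le \tfrac12\langle z_k,t_ku_k\rangle ,
\]
where the left inequality is local optimality. By Proposition \ref{property}(i), $w_k:=z_k/t_k\in\partial^2L(\xi_k,\bar\lambda)(u_k)$, and dividing by $t_k^2$ gives $\langle w_k,u_k\rangle\ge0$. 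In (iii) the same computation starts from $f(x_k)-f(\bar x)\ge\frac c2t_k^2\|u_k\|^2$ and yields $\langle w_k,u_k\rangle\ge c\|u_k\|^2$.

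The main obstacle is passing to the limit, because the direction argument $u_k$ varies while Proposition \ref{property}(ii) is stated for a fixed $v$. To handle this, use that $\partial^2\varphi(x)(v)=\partial\langle v,\nabla\varphi\rangle(x)$ for $C^{1,1}$ functions. Since $\langle v,\nabla\varphi\rangle$ is Lipschitz near $\bar x$ with a constant $\ell$, and subdifferentials are sum-rule stable, $\partial\langle u_k,\nabla\varphi\rangle(\xi_k)\subset\partial\langle u,\nabla\varphi\rangle(\xi_k)+\ell\|u_k-u\|\mathbb{B}$. So we may write $w_k=\tilde w_k+\epsilon_k$ with $\tilde w_k\in\partial^2L(\xi_k,\bar\lambda)(u)$ and $\epsilon_k\to0$. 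The $\tilde w_k$ are bounded by local boundedness, so along a subsequence $\tilde w_k\to z$, and the closedness in Proposition \ref{property}(ii), using $\xi_k\to\bar x$, gives $z\in\partial^2L(\bar x,\bar\lambda)(u)$. In the limit, $\langle z,u\rangle\ge0$, or $\langle z,u\rangle\ge c\|u\|^2>0$ when $u\ne0$, which proves (i)--(iii).
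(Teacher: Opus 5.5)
Your proof is correct. It shares the paper's skeleton: the feasible sequence $x_k=\bar x+t_ku_k$, the vanishing of the first-order term (from $\Gamma(x_k)=\bar J$ or from $\nabla_xL(\bar x,\bar\lambda)=0$), the Taylor formula of Theorem~\ref{Taylor_formula}, and a passage to the limit. Where you differ is in the one real technical issue, the varying direction $u_k$.

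The paper never lets the direction vary. It writes $L(x_k,\bar\lambda)-L(\bar x,\bar\lambda)=L_{1k}+L_{2k}$ using the auxiliary point $\bar x+t_ku$. It then shows $L_{1k}=L(x_k,\bar\lambda)-L(\bar x+t_ku,\bar\lambda)=o(t_k^2)$ by the classical mean value theorem and the Lipschitz continuity of $\nabla_xL(\cdot,\bar\lambda)$; this is where it needs $\langle\nabla_xL(\bar x,\bar\lambda),u_k-u\rangle=0$. Finally it applies the Taylor formula on $[\bar x,\bar x+t_ku]$, so the resulting $z_k$ lie in $\partial^2L(\eta_k,\bar\lambda)(u)$ for the fixed $u$, and Proposition~\ref{property}(ii) applies verbatim.

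You instead apply the Taylor formula once on the actual segment $[\bar x,x_k]$. You then absorb the mismatch inside the second-order subdifferential through the Lipschitz sum rule $\partial^2L(\xi,\bar\lambda)(u_k)\subset\partial^2L(\xi,\bar\lambda)(u)+\ell\|u_k-u\|\mathbb{B}$.

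The trade-off is this. The paper's route uses only elementary calculus plus the properties recorded in Proposition~\ref{property}. Yours is shorter and avoids the auxiliary point, but it imports the limiting subdifferential sum rule, which is not among the paper's preliminaries.

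That import can itself be avoided. $\partial^2L(\cdot,\bar\lambda)(\cdot)$ is the coderivative of the locally Lipschitz map $\nabla_xL(\cdot,\bar\lambda)$, so $\|w\|\le\ell\|v\|$ whenever $w\in\partial^2L(x,\bar\lambda)(v)$ with $x$ near $\bar x$. Moreover, the limiting normal cone to the locally closed set $\gph\nabla_xL(\cdot,\bar\lambda)$ is robust. Hence $x_k\to\bar x$, $v_k\to v$, $w_k\in\partial^2L(x_k,\bar\lambda)(v_k)$ and $w_k\to w$ together imply $w\in\partial^2L(\bar x,\bar\lambda)(v)$. Any cluster point $z$ of your $w_k$ therefore already lies in $\partial^2L(\bar x,\bar\lambda)(u)$, and $\langle w_k,u_k\rangle\to\langle z,u\rangle$.
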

\begin{proof} (i) Since the RRCQ holds at $\bar x$, $\Lambda^C(\bar x)$ is nonempty due to Lemma \ref{Lemma-Lagrange}.  Fix any $u\in T_\Omega(\bar x)$ and $\bar \lambda\in \Lambda^C(\bar x)$. Then $\bar \lambda\in N_D(\phi(\bar x))$ and by Lemma \ref{Lemma-tan-nor} one has
	\begin{equation*}
		-\nabla L_x(\bar x, \bar \lambda)\in {N}^C_\mathcal{S}(\bar x)=\R^n_{{\bar J}^c}. 	
	\end{equation*}	
	By the definition of the contingent cone, there exist sequence $t_k\to 0^+$ and $u_k\to u$  such that $x_k:=\bar x+t_ku_k\in \Omega$ for all $k\in\N$. Hence $x_k\in\mathcal{S}$ and $\phi(x_k)=\phi(\bar x)=0_{\R^m}$,  and so $\langle \bar \lambda, \phi(x_k)-\phi(\bar x)\rangle =0$ for all $k\in\N$. This implies that 
	\begin{equation}\label{New-equa-2}
		L(x_k, \bar \lambda)-L(\bar x, \bar\lambda)=f(x_k)-f(\bar x)\geq 0 
	\end{equation}
	for all $k$ large enough. Since the RRCQ holds at $\bar x$, $T_\Omega(\bar x)=T_\Phi(\bar x)\cap T_\mathcal{S}(\bar x)$. Furthermore, it follows from the fact that $\|\bar x\|_0=s$ and Lemma \ref{Inter-tangent} that $T_\mathcal{S}(\bar x)=\R^n_{\bar J}$. Hence, $u\in \R^n_{\bar J}$ and so $\langle \nabla L_x(\bar x, \bar \lambda), u\rangle =0$. Clearly $\bar x_k\to \bar x$ as $k\to\infty$. Thus 
	$$\bar J=\Gamma(\bar x)\subset \Gamma(x_k)$$ 
	for all $k$ larger enough. This and the facts that $\|\bar x\|_0=|\bar J|=s$ and $|\Gamma(x_k)|\leq s$ imply that $\Gamma(x_k)=\Gamma(\bar x)$ and so $x_k\in\R^n_{\bar J}$ for all $k$ large enough. Since $\R^n_{\bar J}$ is a linear subspace, $u_k=\frac{1}{t_k}(x_k-\bar x)\in \R^n_{\bar J}$ and we therefore get  $\langle \nabla L_x(\bar x, \bar \lambda), u_k\rangle =0$ for all $k$ large enough. 
	
	We now rewrite $L(x_k, \bar \lambda)-L(\bar x, \bar\lambda)$ as follows:
	\begin{align*}
		L(x_k, \bar \lambda)-L(\bar x, \bar\lambda) =[L(x_k, \bar \lambda)&-L(\bar x+t_ku, \bar \lambda)] 	
		\\
		&+[L(\bar x+t_ku, \bar \lambda) - L(\bar x, \bar\lambda)-t_k\langle \nabla L_x(\bar x, \bar \lambda), u\rangle].
	\end{align*}
Put $L_{1k}:=L(x_k, \bar \lambda)-L(\bar x+t_ku, \bar \lambda)$ and
	$$L_{2k}:=L(\bar x+t_ku, \bar \lambda) - L(\bar x, \bar\lambda)-t_k\langle \nabla L_x(\bar x, \bar \lambda), u\rangle.$$
By the assumption, it is clear that $L(\cdot, \bar\lambda)$ is of class $C^{1,1}$ around $\bar x$. Let $\ell$ be the Lipschitz constant of $\nabla_x L(\cdot, \bar\lambda)$ around $\bar x$. 
	
	By the  mean value theorem (see, e.g., \cite[Theorem~5.10]{Rudin_PMA1976}), there exists $\xi_k\in (\bar x+t_k u, x_k)$ such that
	\begin{equation*}
		{L}_{1k} = \langle \nabla_x{L}(\xi_k,\bar\lambda), t_k(u_k-u) \rangle.
	\end{equation*}
	This and the Lipschitzness of $L(\cdot, \bar\lambda)$ imply that
	\begin{align*}\nonumber
		|{L}_{1k}|&= |\langle \nabla_x L(\xi_k,\bar\lambda), t_k(u_k-u) \rangle| \\ \nonumber
		&= |\langle \nabla_x {L}(\xi_k,\bar\lambda) -   \nabla_x {L}(\bar x,\bar\lambda), t_k(u_k -u) \rangle| \\ \nonumber
		& \le t_k \| \nabla_x  {L}(\xi_k,\bar\lambda)-   \nabla_x \mathcal{L}(\bar x,\bar\lambda)\| \cdot \|u_k-u\|
		\\
		&\le \ell t_k \|\xi_k-\bar x\|\cdot \|u_k-u\|
		\\
		&\leq \ell t^2_k (\|u_k-u\|+\|u\|)\|u_k-u\|,
	\end{align*}
	where the second equality follows from the facts that 
	$$\langle \nabla L_x(\bar x, \bar \lambda), u\rangle= \langle \nabla L_x(\bar x, \bar \lambda), u_k\rangle =0.$$  
	This, together with the fact that $u_k\to u$ , implies that $\frac{L_{1k}}{t^2_k} \to 0$ as $k\to\infty$.
	
	On the other hand, for each $k\in\N$, by applying the Taylor formula (Theorem~\ref{Taylor_formula}) for the function $L(\cdot, \bar\lambda)$ on $[\bar x, \bar x+t_k u]$,  we can find  $v_k\in \partial^2 {L} (\eta_k, \bar \lambda) (t_k u)$ for some $\eta_k\in [\bar x, \bar x+t_k u]$ such that 
	\begin{align*} 
		{L}_{2k} \le  \dfrac{1}{2} \langle v_k, t_k u\rangle.
	\end{align*}
	By Proposition~\ref{property}(i), there exists $z_k\in \partial^2  {L} (\eta_k, \bar \lambda)(u)$ such that $v_k=t_kz_k$. Hence
	\begin{align*} 
		{L}_{2k} \le  \dfrac{1}{2}t_k^2 \langle z_k, u\rangle.
	\end{align*}
	This,  together with \eqref{New-equa-2}, implies that
	\begin{align*}
		0\le {L}_{1k} + {L}_{2k} \le  \dfrac{1}{2}t_k^2 \langle z_k, u\rangle+ {L}_{1k},
	\end{align*}
	or, equivalently,
	\begin{equation}\label{New-equa-1}
		0\le  \dfrac{1}{2} \langle z_k, u\rangle+\dfrac{ {L}_{1k}}{t_k^2}.
	\end{equation}
	By the Lipschitzness of $ {L} (\cdot, \bar \lambda)$  around $\bar x$,  $\partial^2  L (\cdot, \bar \lambda)(d) $ is locally bounded around $\bar x$. This, together with the fact that  $\eta_k \to \bar x$, implies that $\{z_k\}$ is bounded. Hence, we may assume that $z_k$ converges to some $z\in\partial^2  {L} (\bar x, \bar \lambda)(u)$. By passing to the limit as $k\to \infty$ in \eqref{New-equa-1}, we obtain $$ \langle z, u \rangle  \ge 0,$$
	as required.
	
	\medskip
	
	(ii) Since the RSRCQ implies the RRCQ, it suffices, in proving this assertion, to consider the case where $\|\bar x\|_0<s$. For $u\in T_\Omega(\bar x)$, there exist sequences $t_k\to 0^+$ and $u_k\to u$ such that $x_k:=\bar x+t_k u_k\in \Omega$ for all $k\in \N$. By Lemma \ref{Lemma-tan-nor}(ii), one has $\widehat{N}_\mathcal{S}(\bar x)=\{0_{\R^n}\}$. Hence, for $\bar\lambda\in\Lambda^S(\bar x)$, $\nabla_x L(\bar x, \bar\lambda)=0$ and so $\langle \nabla L_x(\bar x, \bar \lambda), u\rangle= \langle \nabla L_x(\bar x, \bar \lambda), u_k\rangle =0$. An argument similar to that used in the proof of part (i) shows that there exists $z\in \partial^2 L(\bar x, \bar\lambda)(u)$ satisfying $\langle z, u\rangle\geq 0$. The proof is complete.   
	
	\medskip
	
	(iii) Since $\bar x$ is an isolated local solution of order $2$  to problem \eqref{Problem}, there exist a  constant $c>0$ and a neighborhood $U$ of $\bar x$ such that
	\begin{equation*} 
		f (x)\geq  f (\bar x)  +\frac{1}{2}\, c\, \|x-\bar x\|^2\ \ \forall x\in \Omega \cap U.
	\end{equation*} 
	For $u\in T_\Omega(\bar x)\setminus\{0\}$, there exist $t_k\to 0^+$ and $u_k\to u$ such that $x_k:=\bar x+t_k u_k\in \Omega \cap U$ for all $k\in\N$. Hence
	\begin{equation*}
		L(x_k, \bar \lambda)- L(\bar x, \bar\lambda)=f(x_k)-f(\bar x)\geq 	\frac{1}{2}\, c\, t_k^2\|u_k\|^2 \ \ \forall k\in\N.
	\end{equation*}
	By this and a similar argument as in the proof of part (ii), we can show that there exists $z\in\partial^2 L(\bar x, \bar\lambda)(u)$ such that
	\begin{equation*}
		\frac{1}{2}\langle z, u\rangle\geq \frac{1}{2}\, c\,\|u\|^2>0
	\end{equation*}
	as required. \qed
\end{proof}

\begin{remark}\rm If the RSRCQ does not hold at $\bar x$, then so is Theorem \ref{Necessary-equality}; see \cite[Example 3.1]{ChenJW-et-at-2026}. Furthermore, in Theorem \ref{Necessary-equality}, the claim {\em ``there exists $z\in\partial^2L(\bar x, \bar\lambda)(u)$''} cannot be replaced by  {\em``for all $z\in\partial^2L(\bar x, \bar\lambda)(u)$''}. To see this, let us consider the following example.   	
\end{remark}

\begin{example}\label{Example-31}\rm Let $n=3$, $s=1$, $m=1$,   $\phi(x)=x_1-1$,   $D=\{0\}$, and $$f(x)=\frac{1}{2}(x_2-1)|x_2-1|.$$
	Then 
	$$\Omega=\{x\in\R^3\,:\, x_1-1=0, \|x\|_0\leq 2\}$$
	and $\bar x=(1,1,0)\in\Omega$. Then
	\begin{equation*}
		\nabla f(x)=(0, |x_2-1|, 0) \ \ \forall x\in\R^3.
	\end{equation*}
	Clearly, $\bar x$ is a local optimal solution to \eqref{Problem}. We see that $\bar J=\Gamma(\bar x)=\{1, 2\}$ and so by Lemma \ref{Lemma-tan-nor}, one has $T_\mathcal{S}(\bar x)=\text{span}\{e_1, e_2\}$ and 
	$$N_\mathcal{S}(\bar x)=N_\mathcal{S}^C(\bar x)=\text{span}\{e_3\}.$$  
	An easy computation shows that $\nabla f(\bar x)=(0,0,0)^\top$, $\nabla\phi(\bar x)=(1,0,0)^\top$, and so $\bar \lambda=0$ is the unique  Lagrange multiplier of \eqref{Problem} at $\bar x$. Clearly, the RSRCQ holds at $\bar x$ with respect to $\bar \lambda$. Hence
	\begin{align*}
		T_\Omega(\bar x)=T_\Phi(\bar x)\cap \text{span}\{e_1, e_2\}= \{u=(u_1, u_2, u_3)\in\R^3\,:\, u_1=u_3=0, u_2\in\R\}.
	\end{align*}
	Thus
	\begin{align*}
		\partial L^2(\bar x,\bar \lambda)(u)&=\partial f^2(\bar x)(u)=\partial\langle u, \nabla f\rangle(\bar x)
		\\
		&=\{0\}\times\partial(u_2|\cdot-1|)(1)\times\{0\}
		\\
		&=\begin{cases}
			\{0\}\times[-u_2, u_2]\times\{0\}, \ \ &\text{if}\ \ u_2>0,
			\\
			\{0\}\times\{-u_2, u_2\}\times\{0\}, \ \ &\text{if}\ \ u_2<0,
			\\
			\{0_{\R^3}\}, \ \ &\text{otherwise},
		\end{cases}
	\end{align*}
	and so the inequality $\langle z, u\rangle\geq 0$ does not hold for all $z\in \partial L^2(\bar x,\bar \lambda)(u)$ when $u\in T_\Omega(\bar x)\setminus\{0_{\R^3}\}$. We note that when $u=(0, u_2, 0)$ with $u_2<0$, then $\widehat{\partial}^2 L^2(\bar x,\bar \lambda)(u)=\emptyset$. Hence, in this case, the second-order necessary optimality condition (3.7) in \cite[Theorem 3.2]{ChenJW-et-at-2026} has no meaning.
\end{example}
\subsection{Sparse Optimization with Polyhedral Constraints} 

The next result gives second-order necessary optimality conditions for \eqref{Problem} when $\Phi$ is a polyhedral convex set. 

\begin{proposition}\label{Polyhedral-Pro} Assume that $\Phi$ is a polyhedral convex set and $f$ is of class $C^{1,1}$ around $\bar x\in\Omega$ with $\bar x\neq 0$. If $\bar x$ is a local optimal solution of \eqref{Problem} and the RRCQ holds at $\bar x$, then for each $u\in T_\Omega(\bar x)\cap \mathrm{ker}\nabla f(\bar x)$, there exists $z\in \partial^2 f(\bar x)(u)$ such that $\langle z, u\rangle\geq 0$. 
\end{proposition}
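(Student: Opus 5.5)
The plan is to reduce to feasible \emph{radial} directions, using the fact that $\Omega$ is a finite union of polyhedral convex sets. Once $\bar x+tu\in\Omega$ for all small $t>0$, the Taylor formula (Theorem~\ref{Taylor_formula}) applied on the segment $[\bar x,\bar x+tu]$ yields the conclusion directly. In this approach the first-order term vanishes because $u\in\mathrm{ker}\,\nabla f(\bar x)$, so no multiplier information is needed. The RRCQ only guarantees that the setting of Lemma~\ref{Inter-tangent} applies; I do not expect to use it essentially.

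\emph{Step 1 (structure of $\Omega$).} Since $\mathcal{S}=\bigcup_{J\in\mathcal{J}}\R^n_J$, we have
\begin{equation*}
\Omega=\bigcup_{J\in\mathcal{J}}\Omega_J,\qquad \Omega_J:=\Phi\cap\R^n_J .
\end{equation*}
Each $\Omega_J$ is a polyhedral convex set, being the intersection of a polyhedron with a linear subspace. Every $\Omega_J$ is closed, so if $\bar x\notin\Omega_J$ then a neighborhood of $\bar x$ misses $\Omega_J$. Hence only the indices $J\in\mathcal{J}(\bar x)$ with $\bar x\in\Omega_J$ matter near $\bar x$.

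\emph{Step 2 (tangent cone equals a union of radial cones).} This is the step I expect to need the most care. Take $u\in T_\Omega(\bar x)$ with sequences $t_k\to0^+$ and $u_k\to u$ such that $\bar x+t_ku_k\in\Omega$. Since there are finitely many $J$, after passing to a subsequence all the points $\bar x+t_ku_k$ lie in one fixed $\Omega_J$, and by Step~1 we have $\bar x\in\Omega_J$. Therefore $u\in T_{\Omega_J}(\bar x)$.

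For a polyhedral convex set, $T_{\Omega_J}(\bar x)=\mathrm{cone}(\Omega_J-\bar x)=\mathcal{R}_{\Omega_J}(\bar x)$, because the cone of feasible directions of a polyhedron is itself polyhedral and hence closed. To justify this, I would write $\Omega_J=\{x: \langle a_i,x\rangle\le b_i,\ i\in I\}$. Then $u$ is a feasible direction if and only if $\langle a_i,u\rangle\le0$ for all active indices $i$. The inactive constraints remain satisfied for small $t$, which gives the closedness. Consequently there is $\tau>0$ with $\bar x+tu\in\Omega_J\subset\Omega$ for all $t\in[0,\tau]$.

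\emph{Step 3 (Taylor estimate and limit).} By local optimality, $f(\bar x+tu)-f(\bar x)\ge0$ for all small $t>0$. Fix $t_k\to0^+$. Applying Theorem~\ref{Taylor_formula} to $f$ on $[\bar x,\bar x+t_ku]$ gives $\eta_k\in[\bar x,\bar x+t_ku]$ and $v_k\in\partial^2 f(\eta_k)(t_ku)$ such that
\begin{equation*}
0\le f(\bar x+t_ku)-f(\bar x)-t_k\langle\nabla f(\bar x),u\rangle\le \tfrac12\langle v_k,t_ku\rangle .
\end{equation*}
Here the middle expression equals $f(\bar x+t_ku)-f(\bar x)$ because $\nabla f(\bar x)u=0$. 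By Proposition~\ref{property}(i), we can write $v_k=t_kz_k$ with $z_k\in\partial^2f(\eta_k)(u)$, and then $\langle z_k,u\rangle\ge0$.

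Since $\eta_k\to\bar x$, the local boundedness in Proposition~\ref{property}(ii) lets us pass to a subsequence with $z_k\to z$. The closedness property in the same proposition then gives $z\in\partial^2 f(\bar x)(u)$. Passing to the limit yields $\langle z,u\rangle\ge0$, as required.
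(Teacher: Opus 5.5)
Your proof is correct, but it gets the key reduction (every tangent direction of $\Omega$ at $\bar x$ is a radial direction) through a different decomposition than the paper.

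The paper works with the intersection $\Omega=\Phi\cap\mathcal{S}$:
\begin{itemize}
\item It places $u$ in $T_\Phi(\bar x)\cap T_\mathcal{S}(\bar x)$. It invokes the RRCQ for this, but only the always-valid inclusion $T_\Omega(\bar x)\subset T_\Phi(\bar x)\cap T_\mathcal{S}(\bar x)$ is actually used.
\item It writes $u=t(x-\bar x)$ with $x\in\Phi$, using polyhedrality of $\Phi$ alone.
\item It uses the explicit description of $T_\mathcal{S}(\bar x)$ from Lemma~\ref{Lemma-tan-nor} to find $J_0\supseteq\bar J$ with $|J_0|=s$ and $u,\bar x\in\R^n_{J_0}$. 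Hence $\bar x+\frac1k u\in\Phi\cap\R^n_{J_0}\subset\Omega$.
\end{itemize}

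You instead write $\Omega$ as the finite union of the polyhedra $\Phi\cap\R^n_J$. A pigeonhole argument, together with closedness of the pieces, puts $u$ in the tangent cone of a single piece containing $\bar x$. You then apply ``tangent cone $=$ radial cone'' to that piece rather than to $\Phi$.

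Your route is self-contained: it needs neither Lemma~\ref{Lemma-tan-nor}, nor Lemma~\ref{Inter-tangent}, nor $\bar x\neq 0$. It makes clear that the RRCQ plays no role. It also carries over verbatim to any feasible set that is a finite union of polyhedral convex sets.

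The paper's route depends on the specific structure of $\mathcal{S}$, but it gives a slightly stronger by-product. Every $u\in T_\Phi(\bar x)\cap T_\mathcal{S}(\bar x)$, not only every $u\in T_\Omega(\bar x)$, is radial for $\Omega$. So $T_\Omega(\bar x)=T_\Phi(\bar x)\cap T_\mathcal{S}(\bar x)$ holds for polyhedral $\Phi$ without any constraint qualification.

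The final Taylor-formula and limiting step is the same in both arguments.
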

\begin{proof} The proof is trivial when $u=0$. Thus, we assume that $0\neq u\in T_\Omega(\bar x)\cap \mathrm{ker}\nabla f(\bar x)$. Then by the RRCQ, one has $u\in T_\Phi(\bar x)\cap T_\mathcal{S}(\bar x)$. Since $\Phi$ is a polyhedral convex set, $T_\Phi(\bar x)=\mathrm{cone}\,(\Phi-\bar x)$. Hence, we can find some $t>0$ and $x\in\Phi$ satisfying $u=t(x-\bar x)$. Let $k_1\in\N$ be such that $\frac{t}{k_1}<1$. Then by the convexity of $\Phi$, one has
	\begin{equation*}
		x_k:=\bar x+\frac{1}{k}u= \bar x+\frac{t}{k}(x-\bar x)=\Big(1-\frac{t}{k}\Big)\bar x+\frac{t}{k} x\in \Phi \ \ \forall k\geq k_1.
	\end{equation*}	  
	
	We claim that $x_k\in  \mathcal{S}$ for all $k\in\N$ and so $x_k\in\Omega$ for all $k\geq k_1$. Indeed, if $\|\bar x\|_0=s$, then by Lemma \ref{Lemma-tan-nor}, one has $T_\mathcal{S}(\bar x)=\R^n_{\bar J}$.  Hence, $\bar x, u\in \R^n_{\bar J}$. Since $\R^n_{\bar J}$ is a linear subspace,   $x_k=\bar x+\frac{1}{k}u\in \R^n_{\bar J}\subset \mathcal{S}$ for all $k\in\N$. We now assume that $\|\bar x\|_0<s$. Then by Lemma \ref{Lemma-tan-nor}, 
	$$T_\mathcal{S}(\bar x)=\bigcup\{\R^n_J\,:\, J\supseteq \bar J, |J|= s\}.$$
	Thus there exist $J_0\supset \bar J$ with $|J_0|=s$ such that $u\in \R^n_{J_0}$. Clearly $\bar x\in \R^n_{J_0}$ and we therefore get $x_k\in \R^n_{J_0}\subset \mathcal{S}$ for all $k\in\N$, as required.
	
	Since $\bar x$ is a local optimal solution to \eqref{Problem} and $x_k\to \bar x$ as $k\to\infty$, we may assume that $f(x_k)-f(\bar x)\geq 0$ for all $k\geq k_1$. Hence
	\begin{equation*}
		0\leq f(x_k)-f(\bar x)=f(\bar x+ \tfrac{1}{k}u)-f(\bar x)- \frac{1}{k}\langle\nabla f(\bar x), u\rangle \ \ \forall k\geq k_1.
	\end{equation*}
	By applying the Taylor formula (Theorem~\ref{Taylor_formula}) for $f$ on $[\bar x, x_k]$ we can find $z_k\in \partial^2f(\xi_k)(u)$ for some $\xi_k\in [\bar x, x_k]$ such that
	\begin{equation*}
		0\leq	f(x_k)-f(\bar x)\leq \frac{1}{2k^2}\langle z_k, u\rangle.
	\end{equation*}
	Hence, $\langle z_k, u\rangle\geq 0$ for all $k\geq k_1$. Since $f$ is of class $C^{1,1}$ around $\bar x$ and $\xi_k\to \bar x$ as $k\to\infty$, we see that the sequence $z_k$ is bounded.   Hence, we may assume that $z_k$ converges to some $z\in\partial^2f(\bar x)(u)$ and so $\langle z, u\rangle\geq 0$. \qed
\end{proof}

The following theorem gives second-order necessary optimality conditions for \eqref{Problem} when $\phi$ is an affine mapping and $D$ is a polyhedral convex set. 

\begin{theorem} Assume that $\phi(x)=Ax+b$ is an affine mapping, where $A\in \R^{m\times n}$ and $b\in\R^m$, $D$ is a polyhedral convex set, $0\neq \bar x\in\Omega$ is a local optimal solution to problem \eqref{Problem}, and $f$ is of class $C^{1,1}$ around $\bar x$.  If the RRCQ holds at $\bar x$, then for each $u\in T_\Omega(\bar x)\cap\mathrm{ker}\nabla f(\bar x)$ and $\bar\lambda\in \Lambda^C(\bar x)$, there exists $z\in \partial^2 L(\bar x, \bar \lambda)(u)$ such that $\langle z, u\rangle\geq 0$.
\end{theorem}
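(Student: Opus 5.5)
The plan is to reduce the statement to Proposition~\ref{Polyhedral-Pro}. Two facts make this possible. First, the affine structure of $\phi$ makes $\Phi$ polyhedral. Second, the constraint term in the Lagrangian is affine in $x$, so it does not contribute to the limiting second-order subdifferential.

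\emph{Step 1: $\Phi$ is polyhedral convex.} Since $D$ is a polyhedral convex set, we may write $D=\{y\in\R^m : \langle a_i, y\rangle\le \beta_i,\ i=1,\ldots,p\}$. Then
\[
\Phi=\phi^{-1}(D)=\{x\in\R^n : \langle A^\top a_i, x\rangle\le \beta_i-\langle a_i, b\rangle,\ i=1,\ldots,p\},
\]
which is again a polyhedral convex set. Together with the hypotheses that $f$ is of class $C^{1,1}$ around $\bar x\neq 0$, that $\bar x$ is a local optimal solution, and that the RRCQ holds at $\bar x$, all assumptions of Proposition~\ref{Polyhedral-Pro} are satisfied. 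It therefore yields, for every $u\in T_\Omega(\bar x)\cap \mathrm{ker}\nabla f(\bar x)$, some $z\in\partial^2 f(\bar x)(u)$ with $\langle z,u\rangle\ge 0$.

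\emph{Step 2: $\partial^2 L(\bar x,\bar\lambda)(u)=\partial^2 f(\bar x)(u)$ for every $\bar\lambda\in\R^m$, in particular for $\bar\lambda\in\Lambda^C(\bar x)$.} We have $L(x,\bar\lambda)=f(x)+\langle\bar\lambda, Ax+b\rangle$, so $L(\cdot,\bar\lambda)$ is of class $C^{1,1}$ around $\bar x$ with
\[
\nabla_x L(x,\bar\lambda)=\nabla f(x)+A^\top\bar\lambda .
\]
By the representation from \cite[Theorem 1.90]{Mordukhovich_2006} recalled in Section~\ref{Section2}, for each $u$,
\[
\partial^2 L(\bar x,\bar\lambda)(u)=\partial\langle u,\nabla_x L(\cdot,\bar\lambda)\rangle(\bar x)=\partial\big(\langle u,\nabla f(\cdot)\rangle+\langle u, A^\top\bar\lambda\rangle\big)(\bar x).
\]
The second summand is a constant function. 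Adding a constant translates the epigraph vertically and does not change the limiting subdifferential. Hence $\partial^2 L(\bar x,\bar\lambda)(u)=\partial\langle u,\nabla f\rangle(\bar x)=\partial^2 f(\bar x)(u)$.

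\emph{Conclusion.} Combining the two steps, the $z$ obtained in Step~1 lies in $\partial^2 L(\bar x,\bar\lambda)(u)$ and satisfies $\langle z,u\rangle\ge 0$. The argument works for every $\bar\lambda\in\Lambda^C(\bar x)$ and does not use the multiplier conditions themselves; nonemptiness of $\Lambda^C(\bar x)$ is guaranteed by Lemma~\ref{Lemma-Lagrange}(i) in any case. None of the steps is a real obstacle. The only point needing care is Step~2, namely justifying that the affine term drops out of the second-order subdifferential, and the $C^{1,1}$ representation formula settles this directly.
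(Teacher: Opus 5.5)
Your proposal is correct and is essentially the paper's own proof. The paper also shows that $\Phi=A^{-1}(D-b)$ is polyhedral; it cites \cite[Theorem 19.3]{Rockafellar1970}, whereas you verify this directly from a half-space representation of $D$. It then notes that $\partial^2 L(\bar x,\bar\lambda)(u)=\partial^2 f(\bar x)(u)$, because the constraint term contributes only the constant $\langle u, A^\top\bar\lambda\rangle$, and concludes via Proposition~\ref{Polyhedral-Pro}.
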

\begin{proof} By \cite[Theorem 19.3]{Rockafellar1970}, we see that $\Phi=A^{-1}(D-b)$ is a polyhedral convex set. Now let $u\in T_\Omega(\bar x)\cap\mathrm{ker}\nabla f(\bar x)$ and $\bar\lambda\in \Lambda^C(\bar x)$. Then, we have
	\begin{equation*}
		\nabla_x L(x, \bar\lambda)=\nabla f(x)+ A^\top \bar\lambda \ \ \forall x\in\R^n.
	\end{equation*}
	Hence
	\begin{align*}
		\partial^2L(\bar x, \bar \lambda)(u)&=\partial\langle u, \nabla_x L(\cdot, \bar\lambda)\rangle(\bar x)= \partial\langle u, \nabla f(\cdot)+A^\top \bar\lambda\rangle(\bar x)
		\\
		&= \partial\langle u, \nabla f(\cdot)\rangle(\bar x)=\partial^2 f(\bar x)(u).
	\end{align*}
	and the conclusion follows directly from Proposition \ref{Polyhedral-Pro}. \qed
\end{proof}

By using the radial cone of $\Phi$, we can derive second-order necessary optimality conditions for \eqref{Problem} when $\phi$ is an affine mapping and $D$ is a closed and convex cone, not necessarily polyhedral.

\begin{theorem} Assume that $\phi(x)=Ax+b$ is an affine mapping, where $A\in \R^{m\times n}$ and $b\in\R^m$, $D$ is a closed and convex cone, $0\neq \bar x\in\Omega$ is a local optimal solution to problem \eqref{Problem}, and $f$ is of class $C^{1,1}$ around $\bar x$.  Then for each $u\in \mathcal{R}_\Phi(\bar x)\cap T_\mathcal{S}(\bar x) \cap \mathrm{ker}\nabla f(\bar x)$, there exists $z\in \partial^2 f(\bar x)(u)$ such that $\langle z, u\rangle\geq 0$.
\end{theorem}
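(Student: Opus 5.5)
The plan is to repeat the argument of Proposition \ref{Polyhedral-Pro}, with the radial cone now supplying the feasible ray directly, so polyhedrality is no longer needed. The case $u=0$ is trivial, because $\partial^2 f(\bar x)(0)\ni 0$ by Proposition \ref{property}(i). So fix $0\neq u\in \mathcal{R}_\Phi(\bar x)\cap T_\mathcal{S}(\bar x)\cap\mathrm{ker}\nabla f(\bar x)$. By the definition of the radial tangent cone there is $\tau>0$ with $\bar x+tu\in\Phi$ for all $t\in[0,\tau]$. Choose $k_1\in\N$ with $1/k_1\le\tau$ and set $x_k:=\bar x+\frac1k u$. Then $x_k\in\Phi$ for all $k\ge k_1$.

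Next I will show that $x_k\in\mathcal{S}$, using exactly the case split from the proof of Proposition \ref{Polyhedral-Pro}. If $\|\bar x\|_0=s$, Lemma \ref{Lemma-tan-nor} gives $T_\mathcal{S}(\bar x)=\R^n_{\bar J}$. Then $\bar x,u\in\R^n_{\bar J}$, and since $\R^n_{\bar J}$ is a subspace, $x_k\in\R^n_{\bar J}\subset\mathcal{S}$. If $\|\bar x\|_0<s$, then $u\in\R^n_{J_0}$ for some $J_0\supseteq\bar J$ with $|J_0|=s$. Also $\bar x\in\R^n_{J_0}$, so $x_k\in\R^n_{J_0}\subset\mathcal{S}$. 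In either case $x_k\in\Omega$ for $k\ge k_1$, and $x_k\to\bar x$. By local optimality we may therefore assume $f(x_k)\ge f(\bar x)$ for all $k\ge k_1$. No constraint qualification is needed anywhere, since feasibility comes directly from the radial cone.

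Because $\langle\nabla f(\bar x),u\rangle=0$, we have $0\le f(x_k)-f(\bar x)-\frac1k\langle\nabla f(\bar x),u\rangle$. Apply the upper estimate in Theorem \ref{Taylor_formula} to $f$ on $[\bar x,x_k]$; this is valid for large $k$ because $f\in C^{1,1}$ on a convex neighborhood of $\bar x$. It gives $\xi_k\in[\bar x,x_k]$ and $v_k\in\partial^2 f(\xi_k)(\frac1k u)$ with $0\le\frac1{2k}\langle v_k,u\rangle$. By Proposition \ref{property}(i), $v_k=\frac1k z_k$ with $z_k\in\partial^2 f(\xi_k)(u)$, and hence $\langle z_k,u\rangle\ge0$.

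Finally, by Proposition \ref{property}(ii) the sets $\partial^2 f(\cdot)(u)$ are locally bounded near $\bar x$, so $\{z_k\}$ is bounded. Pass to a convergent subsequence $z_k\to z$. The closedness part of Proposition \ref{property}(ii) gives $z\in\partial^2 f(\bar x)(u)$, and letting $k\to\infty$ yields $\langle z,u\rangle\ge0$.

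The only real obstacle is the feasibility step, that is, getting $x_k\in\Phi$ without the polyhedral assumption or RCQ. The radial cone hypothesis is designed to handle exactly this. The remaining steps are routine repetitions of the earlier proof.
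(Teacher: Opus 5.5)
Your proposal is correct and follows the paper's proof essentially step for step. The radial cone gives $x_k=\bar x+\frac1k u\in\Phi$ for large $k$, and the case split from Proposition \ref{Polyhedral-Pro} gives $x_k\in\mathcal{S}$. The Taylor formula, together with the local boundedness and closedness of $\partial^2 f(\cdot)(u)$, then finishes the argument.
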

\begin{proof} Since $u\in \mathcal{R}_\Phi(\bar x)$, there exists $\tau>0$ such that $\bar x+ tu\in \Phi$ for all $t\in [0, \tau]$. This implies that $x_k:=\bar x+\frac{1}{k}u\in \Phi$ for all $k$ large enough.  An argument similar to that used in the proof Proposition \ref{Polyhedral-Pro} shows that $x_k\in \mathcal{S}$ and so does $x_k\in \Omega$ for all $k$ large enough. Hence, we can repeat the final part of the proof of Proposition \ref{Polyhedral-Pro} to get the conclusion. \qed
\end{proof}
\subsection{Sparse Optimization with Mixed Constraints}
Consider problem \eqref{Problem} with $\phi=(g, h)$ and $D=\R^p_-\times\{0_{\R^q}\}$, where $g\colon\R^n\to\R^p$ and $h\colon\R^n\to \R^q$ are vector-valued functions. Then
\begin{equation}\label{Mixed-constraint} 
	\Phi=\{x\in\R^n\,:\, g_i(x)\leq 0, i=1, \ldots, p,\, h_j(x)=0, j=1, \ldots, q\},
\end{equation}
$\Omega=\Phi\cap\mathcal{S}$, and  the Lagrangian function associated with problem \eqref{Problem} is defined as
\begin{equation*}
	L(x, \lambda, \mu):=f(x)+\langle \lambda, g(x)\rangle+\langle \mu, h(x)\rangle, \ \ \forall x\in\R^n, \lambda\in\R^p_+, \mu\in\R^q.  
\end{equation*}

Let $\bar x\in\Omega$. The active index set to $\bar x$ is defined by
\begin{equation*}
	I(\bar x):=\{i\in \{1, \ldots, p\}\,:\, g_i(\bar x)=0\}.
\end{equation*}
Then the sets of Lagrange multipliers at $\bar x$ in the sense of Fr\'echet and Clarke  are given, respectively, by
\begin{align*}
	\Lambda^S(\bar x)&:=\{(\lambda, \mu)\in \R^p_+\times\R^q :  -\nabla_x L(\bar x, \lambda, \mu)\in \widehat{N}_\mathcal{S}(\bar x), \lambda_i g_i(\bar x)=0, i=1, \ldots, p\},
	\\
	\Lambda^C(\bar x)&:=\{(\lambda, \mu)\in \R^p_+\times\R^q :  -\nabla_x L(\bar x, \lambda, \mu)\in {N}^C_\mathcal{S}(\bar x), \lambda_i g_i(\bar x)=0, i=1, \ldots, p\}.
\end{align*} 

If $\Lambda^S(\bar x)$ (resp., $\Lambda^C(\bar x)$) is nonempty, then $\bar x$ is called an S-stationary point (resp., C-stationary point) of problem \eqref{Problem}.
\begin{definition}[{\normalfont  see \cite[Definition 2.4]{Pan-Xiu-Fan}}]\rm  We say that:
	\begin{enumerate}[\rm(i)]
		\item The {\em  restricted linear independence constraint qualification} (R-LICQ) holds at $\bar x$ if
		
		$\bullet$ when $\|\bar x\|_0=s$, $\nabla g_i(\bar x)$, $i\in I(\bar x)$, $\nabla h_j(\bar x)$, $j=1, \ldots, q$, are linearly independent;
		
		$\bullet$ when  $0<\|\bar x\|_0<s$, $\nabla_{\bar J}\, g_i(\bar x)$, $i\in I(\bar x)$, $\nabla_{\bar J}\, h_j(\bar x)$, $j=1, \ldots, q$, are linearly independent, where $\nabla_{\bar J}\, g_i(\bar x):=(\nabla g_i(\bar x))_{\bar J}$ and $\nabla_{\bar J}\, h_j(\bar x):=(\nabla h_j(\bar x))_{\bar J}$. 
		
		\item  The {\em  restricted Mangasarian--Fromovitz constraint qualification} (R-MFCQ) holds at $\bar x$ if
		
		$\bullet$ when $\|\bar x\|_0=s$, $\nabla_{\bar J}\, h_j(\bar x)$, $j=1, \ldots, q$, are linearly independent, and there exist a vector $y\in\R^n$ such that
		\begin{equation*}
			\langle\nabla g_i(\bar x), y\rangle <0, i\in I(\bar x) \ \ \text{and}\ \ \langle\nabla h_j(\bar x), y\rangle =0, j=1, \ldots, q;
		\end{equation*} 
		
		$\bullet$ when  $0<\|\bar x\|_0<s$, $\nabla_{\bar J}\, h_j(\bar x)$, $j=1, \ldots, q$, are linearly independent, and for any $J\in  \mathcal{J}(\bar x):=\{J\in\mathcal{J}\,:\, J\supset \bar J\}$, there exists a vector $y\in \R^n_J$ such that
		\begin{equation*}
			\langle\nabla g_i(\bar x), y\rangle <0, i\in I(\bar x) \ \ \text{and}\ \ \langle\nabla h_j(\bar x), y\rangle =0, j=1, \ldots, q.
		\end{equation*}
	\end{enumerate}
\end{definition}

\begin{remark} \rm 
	\begin{enumerate}[\rm(i)]
		\item Clearly, if the R-LICQ holds at $\bar x$ then  so does R-MFCQ. Furthermore,  when $\|\bar x\|_0=s$, these conditions reduce to  the classical LICQ and the classical MFCQ, respectively. 
		
		\item It is clear that if the R-LICQ (resp., R-MFCQ) holds at $\bar x$ then  so does the LICQ (resp.,  MFCQ). 
		
		\item (see \cite[Remark 1]{Zhao-et-al}) If $\|\bar x\|_0=s$ and the RRCQ holds at $\bar x$, then so does the R-LICQ.
	\end{enumerate}		
\end{remark}

\begin{lemma}[{\normalfont see \cite[Proposition 2.5, Proposition 2.11 and Remark 2.6]{Pan-Xiu-Fan} and \cite[Exercise 2.53]{Mordukhovich_2018}}]\label{Tang-Inter} Assume that $0\neq \bar x\in\Omega$. 
	\begin{enumerate}[\rm(i)]
		\item If the R-MFCQ holds at $\bar x$, then $T_\Omega(\bar x)=T_\Phi(\bar x)\cap T_\mathcal{S}(\bar x)$ and $${N}_\Omega(\bar x)\subset {N}_\Phi(\bar x)+ {N}_\mathcal{S}(\bar x).$$ 
		Furthermore, we have
		\begin{equation*}
			\widehat{N}_\Phi(\bar x)= {N}_\Phi(\bar x)= \bigg\{\sum_{i\in I(\bar x)}\lambda_i\nabla g_i(\bar x)+\sum_{j=1}^q\mu_j\nabla h_j(\bar x)\,:\, \lambda\in\R^{|I(\bar x)|}_+, \mu\in\R^q\bigg\}.
		\end{equation*} 
		\item If the R-LICQ  holds at $\bar x$, then $\widehat{N}_\Omega(\bar x)=\widehat{N}_\Phi(\bar x)+ \widehat{N}_\mathcal{S}(\bar x)$ and 
		$${N}_\Omega(\bar x)={N}_\Phi(\bar x)+ {N}_\mathcal{S}(\bar x).$$  
	\end{enumerate}	
\end{lemma}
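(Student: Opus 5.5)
The lemma bundles several results cited from \cite{Pan-Xiu-Fan} and \cite{Mordukhovich_2018}. I would still organize a self-contained argument around the decomposition $\mathcal{S}=\bigcup_{J\in\mathcal{J}}\R^n_J$, which localizes near $\bar x$ as
\[
\Omega\cap U=\bigcup_{J\in\mathcal{J}(\bar x)}(\Phi\cap\R^n_J)\cap U
\]
for a small neighborhood $U$ of $\bar x$. This works because any $x\in\mathcal{S}$ near $\bar x$ has $\Gamma(x)\supseteq\bar J$, so $x\in\R^n_J$ for some $J\supseteq\bar J$ with $|J|=s$. Each piece $\Phi_J:=\Phi\cap\R^n_J$ is a standard nonlinear-programming feasible set: to the constraints $g_i\le 0$, $h_j=0$ we add the linear equalities $x_i=0$ for $i\notin J$.

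\emph{Part (i).} The key step is to show that, under the R-MFCQ, the classical MFCQ holds for each $\Phi_J$ at $\bar x$. The gradients of the added equalities are $e_i$, $i\in J^c$. Joint linear independence of $\{\nabla h_j(\bar x)\}$ and $\{e_i\}_{i\in J^c}$ is equivalent to linear independence of the restrictions $(\nabla h_j(\bar x))_J$. I will derive this from independence of $(\nabla h_j(\bar x))_{\bar J}$, using $\bar J\subseteq J$. The MFCQ direction for $\Phi_J$ is exactly the vector $y\in\R^n_J$ supplied by the R-MFCQ; when $\|\bar x\|_0=s$ one has $J=\bar J$, and I will use $y$ projected onto $\R^n_{\bar J}$. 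This point needs care, so I would reread the definition and, if necessary, interpret it in the restricted form.

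With MFCQ for each $\Phi_J$, the classical results give
\[
T_{\Phi_J}(\bar x)=\{d\in\R^n_J:\langle\nabla g_i(\bar x),d\rangle\le0\ (i\in I(\bar x)),\ \langle\nabla h_j(\bar x),d\rangle=0\}.
\]
They also give that $N_{\Phi_J}(\bar x)$ is the cone generated by the constraint gradients and $\R^n_{J^c}$. Since the tangent cone of a finite union is the union of the tangent cones, $T_\Omega(\bar x)=\bigcup_J T_{\Phi_J}(\bar x)$. This union equals $T_\Phi(\bar x)\cap\bigcup_J\R^n_J=T_\Phi(\bar x)\cap T_\mathcal{S}(\bar x)$ by Lemma~\ref{Lemma-tan-nor}(ii), where MFCQ for $\Phi$ itself follows from the R-MFCQ.

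For the limiting normal cone, I will use the fact that the limiting normal cone of a finite union of closed sets is contained in the union of the limiting normal cones of the pieces active at the point (\cite[Exercise 2.53]{Mordukhovich_2018}). This gives
\[
N_\Omega(\bar x)\subset\bigcup_J\big(N_\Phi(\bar x)+\R^n_{J^c}\big)\subset N_\Phi(\bar x)+N_\mathcal{S}(\bar x),
\]
using Lemma~\ref{Lemma-tan-nor}(ii) for $N_\mathcal{S}(\bar x)$. The formula for $\widehat N_\Phi(\bar x)=N_\Phi(\bar x)$ is the standard MFCQ polar-of-linearized-cone result for $\Phi$.

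\emph{Part (ii).} Under the R-LICQ, the pieces satisfy LICQ. For the Fréchet normal cone, I will use $\widehat N_\Omega(\bar x)=\big(T_\Omega(\bar x)\big)^{\circ}=\bigcap_J\big(T_{\Phi_J}(\bar x)\big)^\circ$. Each polar equals the cone generated by the constraint gradients plus $\R^n_{J^c}$.

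If $|\bar J|=s$, there is only one piece, and the identity $\widehat N_\Omega(\bar x)=\widehat N_\Phi(\bar x)+\widehat N_\mathcal{S}(\bar x)$ follows immediately. If $|\bar J|<s$, the hardest step is showing
\[
\bigcap_J\big(N_\Phi(\bar x)+\R^n_{J^c}\big)=N_\Phi(\bar x).
\]
The plan is to take an element $w$ of the intersection and write, for each $J$, $w=\sum\lambda^J_i\nabla g_i+\sum\mu^J_j\nabla h_j+r^J$ with $r^J\in\R^n_{J^c}$. Restricting to the $\bar J$ coordinates, which lie in every $J$, and invoking linear independence of the restricted gradients forces the multipliers to be independent of $J$. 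Then $r^J$ is a single vector lying in $\bigcap_J\R^n_{J^c}$. For $|\bar J|<s$ this intersection is $\{0\}$, since every index outside $\bar J$ belongs to some $J$ — here $s<n$ matters.

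The limiting equality follows by taking outer limits along $x\to\bar x$. R-LICQ persists at nearby points with support containing $\bar J$, so the Fréchet formula applies there. The reverse inclusion comes from Lemma~\ref{Lemma-tan-nor} and exactness of the Fréchet formula at nearby points with full support.
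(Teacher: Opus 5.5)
The paper gives no argument beyond the citations, so there is no internal proof to match. Your piecewise decomposition $\Omega\cap U=\bigcup_{J\in\mathcal{J}(\bar x)}\Phi_J\cap U$ is a reasonable self-contained route, and most of it is correct:
MFCQ/LICQ for each $\Phi_J$ via $\bar J\subset J$, the union rules for contingent cones and limiting normals, $\widehat N_\Omega(\bar x)=T_\Omega(\bar x)^\circ$, and your multiplier-uniqueness argument giving $\bigcap_J\bigl(N_\Phi(\bar x)+\R^n_{J^c}\bigr)=N_\Phi(\bar x)$ when $|\bar J|<s$.

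\textbf{The gap.} It is the inclusion $N_\Phi(\bar x)+N_\mathcal{S}(\bar x)\subset N_\Omega(\bar x)$ in (ii) when $|\bar J|<s$. Take $w=v+r$ with $v=\sum_{i\in I(\bar x)}\lambda_i\nabla g_i(\bar x)+\sum_j\mu_j\nabla h_j(\bar x)$ and $r\in\R^n_{J^c}$, $J\in\mathcal{J}(\bar x)$. Since $\widehat N_\Omega(\bar x)=N_\Phi(\bar x)$, your last sentence points to realizing $w$ as a limit of Fr\'echet normals at points $x_k\in\Omega$ with $\Gamma(x_k)=J$, where $\widehat N_\Omega(x_k)=N_\Phi(x_k)+\R^n_{J^c}$. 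That needs two things:
(a) such points of $\Omega$, not merely of $\mathcal{S}$, arbitrarily close to $\bar x$;
(b) the constraints $g_i$ with $\lambda_i>0$ still active at $x_k$, since otherwise elements of $N_\Phi(x_k)$ cannot converge to $v$ (its representation is unique).
Lemma~\ref{Lemma-tan-nor} concerns $\mathcal{S}$ alone and supplies neither.

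\textbf{How to fill it.} Use the implicit function theorem. By R-LICQ the vectors $\nabla_{\bar J}g_i(\bar x)$, $i\in I(\bar x)$, and $\nabla_{\bar J}h_j(\bar x)$ are independent. So on $\R^n_J$ the system $g_i=0$ ($i\in I(\bar x)$), $h=0$ can be solved for suitable coordinates in $\bar J$ as $C^1$ functions of the remaining coordinates. Set the coordinates in $J\setminus\bar J$ equal to $1/k$. This gives $x_k\to\bar x$ with $\Gamma(x_k)=J$, $I(x_k)=I(\bar x)$ and $x_k\in\Omega$. Then $\sum_i\lambda_i\nabla g_i(x_k)+\sum_j\mu_j\nabla h_j(x_k)+r\in\widehat N_\Omega(x_k)$, and this converges to $w$. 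This is a second place where independence on $\bar J$, not merely on each $J$, is essential. The converse inclusion $\subset$ needs no outer-limit argument: it is part (i), since R-LICQ implies R-MFCQ.

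\textbf{The case $\|\bar x\|_0=s$.} Projecting $y$ onto $\R^n_{\bar J}$ cannot work, because under the literal wording (unrestricted $y$ in the R-MFCQ, full gradients in the R-LICQ) the lemma is false. Take $n=2$, $s=1$, $\bar x=(1,0)$.
With $g(x)=x_2+(x_1-1)^2\le 0$, the literal R-MFCQ holds with $y=(0,-1)$. But $\Omega$ is locally $\{\bar x\}$, so $T_\Omega(\bar x)=\{0\}\neq\R e_1=T_\Phi(\bar x)\cap T_\mathcal{S}(\bar x)$.
With $h(x)=x_2+(x_1-1)^2=0$ instead, $\nabla h(\bar x)=(0,1)$ satisfies the literal R-LICQ. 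Yet $\widehat N_\Omega(\bar x)=\R^2\neq\R e_2=\widehat N_\Phi(\bar x)+\widehat N_\mathcal{S}(\bar x)$, so your claim that ``the pieces satisfy LICQ'' also fails there.
You must therefore take as hypothesis the restricted conditions: $y\in\R^n_{\bar J}$, and independence of $\nabla_{\bar J}g_i(\bar x)$, $\nabla_{\bar J}h_j(\bar x)$. These are what the RRCQ yields in the remark, and with them your argument for this case goes through.
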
  

\begin{lemma}[{\normalfont see \cite[Theorems 3.2 and 3.5]{Pan-Xiu-Fan}}]\label{Nonemty-Lagran} Assume that $0\neq \bar x\in\Omega$ is a local optimal solution to \eqref{Problem}.
	\begin{enumerate}[\rm(i)]
		\item If the R-MFCQ holds at $\bar x$, then $\Lambda^C(\bar x)$ is nonempty.
		\item  If the R-LICQ  holds at $\bar x$, then $\Lambda^S(\bar x)$ is nonempty.
	\end{enumerate}
\end{lemma}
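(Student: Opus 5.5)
The plan is to combine the Fermat rule with the normal cone calculus of Lemma~\ref{Tang-Inter}, and to read off the multipliers from the representation of $N_\Phi(\bar x)$ given there. First, since $\bar x$ is a local optimal solution to \eqref{Problem} and $f$ is $C^1$, the generalized Fermat rule gives
\begin{equation*}
-\nabla f(\bar x)\in \widehat N_\Omega(\bar x).
\end{equation*}
This follows directly from the definition of $\widehat N_\Omega$. Indeed, $f(x)-f(\bar x)\geq 0$ for $x\in\Omega$ near $\bar x$, and $f(x)-f(\bar x)=\langle\nabla f(\bar x),x-\bar x\rangle+o(\|x-\bar x\|)$. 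Hence
\begin{equation*}
\limsup_{x\xrightarrow{\Omega}\bar x}\frac{\langle -\nabla f(\bar x),x-\bar x\rangle}{\|x-\bar x\|}\leq 0.
\end{equation*}

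For (i), assume the R-MFCQ. We use $\widehat N_\Omega(\bar x)\subset N_\Omega(\bar x)$ and Lemma~\ref{Tang-Inter}(i), which gives $N_\Omega(\bar x)\subset N_\Phi(\bar x)+N_\mathcal{S}(\bar x)$. So we can write $-\nabla f(\bar x)=\eta+w$ with $\eta\in N_\Phi(\bar x)$ and $w\in N_\mathcal{S}(\bar x)$. By the explicit formula in Lemma~\ref{Tang-Inter}(i),
\begin{equation*}
\eta=\sum_{i\in I(\bar x)}\lambda_i\nabla g_i(\bar x)+\sum_{j=1}^q\mu_j\nabla h_j(\bar x)
\end{equation*}
with $\lambda_i\geq 0$ and $\mu\in\R^q$. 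We extend $\lambda$ by $\lambda_i:=0$ for $i\notin I(\bar x)$, so that $\lambda_i g_i(\bar x)=0$ for all $i$. Then $-\nabla_x L(\bar x,\lambda,\mu)=w\in N_\mathcal{S}(\bar x)\subset \mathrm{cl}\,\mathrm{co}\,N_\mathcal{S}(\bar x)=N^C_\mathcal{S}(\bar x)$, which by Lemma~\ref{Lemma-tan-nor}(i) equals $\R^n_{\bar J^c}$. Hence $(\lambda,\mu)\in\Lambda^C(\bar x)$.

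For (ii), assume the R-LICQ. Then the R-MFCQ also holds, so the formula for $\widehat N_\Phi(\bar x)=N_\Phi(\bar x)$ in Lemma~\ref{Tang-Inter}(i) is available. Lemma~\ref{Tang-Inter}(ii) gives the exact sum rule $\widehat N_\Omega(\bar x)=\widehat N_\Phi(\bar x)+\widehat N_\mathcal{S}(\bar x)$. Decomposing $-\nabla f(\bar x)=\eta+w$ with $\eta\in\widehat N_\Phi(\bar x)$ and $w\in \widehat N_\mathcal{S}(\bar x)$, and extending $\lambda$ by zero exactly as in (i), we obtain $-\nabla_xL(\bar x,\lambda,\mu)=w\in\widehat N_\mathcal{S}(\bar x)$ together with complementarity. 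Hence $(\lambda,\mu)\in\Lambda^S(\bar x)$.

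The substantive step is the normal cone sum rule for $\Omega=\Phi\cap\mathcal{S}$. In particular, in case (ii) one needs the \emph{Fréchet} sum rule, because the limiting one would only yield an M-type multiplier. We take both sum rules from Lemma~\ref{Tang-Inter}; the rest is bookkeeping.

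If one wanted to prove the Fréchet sum rule directly, I would first try the following route. Localize to each $J\in\mathcal{J}(\bar x)$, where $\mathcal{S}$ coincides near $\bar x$ with the subspace $\R^n_J$ on the corresponding piece. Apply the classical LICQ-based representation of normals for $\Phi\cap\R^n_J$, which is valid by the restricted linear independence. Then intersect over $J$, using $\widehat N$ of a finite union as the intersection of the $\widehat N$'s.
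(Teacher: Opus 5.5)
Your argument is correct relative to the paper. The paper gives no proof of its own; it cites Theorems 3.2 and 3.5 of Pan--Xiu--Fan, which rest on the normal-cone decompositions recorded here as Lemma~\ref{Tang-Inter}, so your route (Fermat's rule $-\nabla f(\bar x)\in\widehat N_\Omega(\bar x)$, then reading $(\lambda,\mu)$ off the representation of $N_\Phi(\bar x)$ and extending $\lambda$ by zero off $I(\bar x)$) is the standard one.

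The one caveat is inherited from the paper, not from your reasoning. When $\|\bar x\|_0=s$, both Lemma~\ref{Tang-Inter} and the statement need the $\bar J$-restricted hypotheses that your direct sketch tacitly uses, namely $\nabla_{\bar J}$-independence and $y\in\R^n_{\bar J}$. With the full-gradient R-LICQ as written, take $n=2$, $s=1$, $f(x)=x_1$, $h(x)=x_2-(x_1-1)^2$, no inequality constraints, and $\bar x=(1,0)$. Then $\Omega=\{(1,0),(0,1)\}$, so $\bar x$ is an isolated local minimizer, yet $-\nabla f(\bar x)-\mu\nabla h(\bar x)=(-1,-\mu)\notin\widehat N_\mathcal{S}(\bar x)=\mathrm{span}\{e_2\}$ for every $\mu$, so $\Lambda^S(\bar x)=\emptyset$.
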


The following example shows that the R-LICQ at $\bar x$  is not sufficient to guarantee that $\Lambda^S(\bar x)$ is a singleton.   
\begin{example}\rm Consider problem
	\begin{equation*}
		\min_{\R^3} f(x):=(x_1-1)^2+(x_2-1)^2 \ \ \text{s.t.}\ \ x\in \Omega:=\{x\in\Phi\,:\, \|x\|_0\leq 2\},
	\end{equation*}	 
	where 
\begin{align*}
\Phi:=\big\{x\in \R^3\,:\, g_1(x):=x_1-1\leq 0, \ \ &g_2(x):=x_2-1\leq 0,
\\
 &h(x):=x_1+x_2+x_3-2=0\big\}.
\end{align*}	
Then $\bar x=(1,1,0)\in\Omega$ is a global optimal solution of   \eqref{Problem}, $\nabla f(\bar x)=(0, 0, 0)^\top$, $\nabla g_1(\bar x)=(1, 0, 0)^\top$, $\nabla g_2(\bar x)=(0, 1, 0)^\top$, and $\nabla h(\bar x)=(1, 1, 1)^\top$. It is easy to check that $\|\bar x\|_0=2$, $I(\bar x)=\{1, 2\}$, $\bar J=\{1, 2\}$, and the R-LICQ holds at $\bar x$.  Hence, $\widehat{N}_\mathcal{S}(\bar x)=\R^3_{{\bar J}^c}=\mathrm{span}\{e_3\}$. Thus
	\begin{align*}
		\Lambda^S(\bar x)=\big\{((\lambda_1, \lambda_2), \mu)\in\R^2_+\times\R\,:\, \nabla f(\bar x)&+\lambda_1\nabla g_1(\bar x)+\lambda_2\nabla g_2(\bar x)
		\\
		&\ \ \ \ \ \ \ \ \ \ \ \ \  +\mu\nabla h(\bar x)\in \mathrm{span}\{e_3\}\big\}
		\\
		=\{((\lambda_1, \lambda_2), \mu)\in\R^2_+\times\R\,:\,\lambda_1=\lambda_2&=-\mu, \mu\in\R\}=\mathrm{span}\{(1,1,-1)\}. 
	\end{align*}
\end{example}

\medskip 

For $\bar x\in \Omega$ and $(\bar\lambda, \bar\mu)\in \Lambda^S(\bar x)$, we put $\bar I(\bar x):=\{i\in \{1, \ldots, p\}\,:\, \lambda_i>0\}$. Clearly, $\bar I(\bar x)\subset I(\bar x)$. Let 
\begin{equation*}
	\bar\Phi:=\Phi\cap \{x\in\R^n\,:\, g_i(x)=0, i\in \bar I(\bar x)\}  
\end{equation*}
and 
\begin{align*}
	L_{\bar\Phi}(\bar x):=\big\{u\in\R^n\,:\, \langle\nabla g_i(\bar x), u\rangle=0, i\in \bar I(\bar x),  &\langle\nabla g_i(\bar x), u\rangle\leq 0, i \in I(\bar x)\setminus \bar I(\bar x),  
	\\
	&\langle\nabla h_j(\bar x), u\rangle=0,\ \ j=1, \ldots, q\big\}.
\end{align*}

We now present second-order necessary optimality conditions for \eqref{Problem} with mixed constraints.

\begin{theorem}\label{Necessary-Mixed} Let $0\neq \bar x\in \Omega$. Suppose that $f, g,$ and  $h$ are of class $C^{1,1}$ around $\bar x$, and the R-LICQ holds at $\bar x$.  
	\begin{enumerate}[\rm(i)]
		\item If $\bar x$ is a local optimal solution to problem \eqref{Problem}, then for each $(\bar\lambda, \bar \mu)\in\Lambda^S(\bar x)$ and $u\in  T_{\bar \Phi}(\bar x)\cap T_\mathcal{S}(\bar x)$, there exists $z\in \partial^2 L(\bar x, \bar\lambda, \bar\mu)(u)$ such that $\langle z, u\rangle\geq 0$. 
		
		\item If $\bar x$ is an isolated local  solution of order $2$ to problem \eqref{Problem}, then for each $(\bar\lambda, \bar \mu)\in\Lambda^S(\bar x)$ and $u\in[T_{\bar \Phi}(\bar x)\cap T_\mathcal{S}(\bar x)]\setminus\{0\}$, there exists $z\in \partial^2 L(\bar x, \bar\lambda, \bar\mu)(u)$ such that $\langle z, u\rangle > 0$. 
		
	\end{enumerate}	
\end{theorem}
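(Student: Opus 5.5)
Fix $(\bar\lambda,\bar\mu)\in\Lambda^S(\bar x)$ and $u\in T_{\bar\Phi}(\bar x)\cap T_\mathcal{S}(\bar x)$; we may assume $u\neq 0$. The plan is to produce a sequence $x_k=\bar x+t_ku_k\in\Omega$ with $t_k\to0^+$ and $u_k\to u$, and to arrange three properties along it:
\begin{equation*}
\bar\lambda_i g_i(x_k)=0\ (i\in\bar I(\bar x)),\qquad \langle\nabla_x L(\bar x,\bar\lambda,\bar\mu),u_k\rangle=0,\qquad \langle\nabla_x L(\bar x,\bar\lambda,\bar\mu),u\rangle=0.
\end{equation*}
Once these hold, the argument of Theorem \ref{Necessary-equality}(i) applies verbatim. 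We have $h(x_k)=0$, $\bar\lambda_ig_i(x_k)=0$ for $i\in\bar I(\bar x)$, and $\bar\lambda_i=0$ for $i\notin\bar I(\bar x)$. Hence
\begin{equation*}
L(x_k,\bar\lambda,\bar\mu)-L(\bar x,\bar\lambda,\bar\mu)=f(x_k)-f(\bar x)\ge 0
\end{equation*}
for $k$ large. We then split this difference as $L_{1k}+L_{2k}$. The mean value theorem and the Lipschitz continuity of $\nabla_xL(\cdot,\bar\lambda,\bar\mu)$, using the two orthogonality relations, give $L_{1k}/t_k^2\to0$. Theorem \ref{Taylor_formula} on $[\bar x,\bar x+t_ku]$ together with Proposition \ref{property} gives $z_k\in\partial^2L(\eta_k,\bar\lambda,\bar\mu)(u)$ with $L_{2k}\le\frac12t_k^2\langle z_k,u\rangle$. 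Local boundedness and the closedness in Proposition \ref{property}(ii) then yield a limit $z\in\partial^2L(\bar x,\bar\lambda,\bar\mu)(u)$ with $\langle z,u\rangle\ge0$. For (ii), the same computation uses the lower bound $f(x_k)-f(\bar x)\ge\frac12ct_k^2\|u_k\|^2$ and gives $\langle z,u\rangle\ge c\|u\|^2>0$, exactly as in Theorem \ref{Necessary-equality}(iii).

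The main work is constructing $x_k$ with these properties.

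\emph{Case $\|\bar x\|_0<s$.} Here $\widehat N_\mathcal{S}(\bar x)=\{0\}$ by Lemma \ref{Lemma-tan-nor}, so $\nabla_xL(\bar x,\bar\lambda,\bar\mu)=0$ and both orthogonality relations are automatic.

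\emph{Case $\|\bar x\|_0=s$.} Here $\nabla_xL(\bar x,\bar\lambda,\bar\mu)\in\R^n_{\bar J^c}$ and $T_\mathcal{S}(\bar x)=\R^n_{\bar J}$, so $u\in\R^n_{\bar J}$. Any $x_k\in\mathcal{S}$ close to $\bar x$ satisfies $\Gamma(x_k)=\bar J$, hence $u_k\in\R^n_{\bar J}$. This again gives both orthogonality relations.

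So in both cases it remains to find $x_k\in\bar\Phi\cap\mathcal{S}$ with $(x_k-\bar x)/t_k\to u$. That is, I need the tangent-cone intersection rule
\begin{equation*}
T_{\bar\Phi\cap\mathcal{S}}(\bar x)=T_{\bar\Phi}(\bar x)\cap T_\mathcal{S}(\bar x),
\end{equation*}
since membership in $\bar\Phi$ gives $g_i(x_k)=0$ for $i\in\bar I(\bar x)$ and $x_k\in\Omega$.

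This intersection rule is the step I expect to be the main obstacle. I would obtain it from Lemma \ref{Tang-Inter}(i) applied to the modified system that describes $\bar\Phi$: the inequalities $g_i\le0$ for $i\notin\bar I(\bar x)$, and the equalities $g_i=0$ for $i\in\bar I(\bar x)$ together with $h_j=0$ for all $j$. For this system the active inequality gradients together with all equality gradients form a subfamily of those in the original R-LICQ, restricted to $\bar J$ when $\|\bar x\|_0<s$. Hence the R-LICQ is inherited by the modified system, so the R-MFCQ holds for it. Lemma \ref{Tang-Inter}(i) then gives the desired intersection formula, and also shows $\bar x\ne0$ lies in the modified feasible set, as required.

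If applying the lemma in this form is delicate, my fallback is to work on a single piece $\R^n_J$ with $J\supseteq\bar J$, $|J|=s$, and $u\in\R^n_J$. There the problem is smooth, and the restricted LICQ on $\R^n_J$, which follows from R-LICQ since $\bar J\subseteq J$, allows an implicit-function/Lyusternik construction of $x_k\in\R^n_J\cap\bar\Phi$ along $u$.
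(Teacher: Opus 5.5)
Your proposal is correct and is essentially the paper's proof. The paper likewise transfers the R-LICQ to the system describing $\bar\Phi$, uses Lemma~\ref{Tang-Inter} to get $u\in T_{\bar\Phi\cap\mathcal S}(\bar x)$, takes $x_k=\bar x+t_ku_k\in\bar\Phi\cap\mathcal S$ so that $L(x_k,\bar\lambda,\bar\mu)-L(\bar x,\bar\lambda,\bar\mu)=f(x_k)-f(\bar x)$, and reruns the $L_{1k}+L_{2k}$ argument of Theorem~\ref{Necessary-equality}; your explicit two-case check of the orthogonality relations and your remark that the modified system has the same gradient family are, if anything, more careful than the paper's one-line justifications.

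One caveat applies to the paper as well as to you. When $\|\bar x\|_0=s$, the intersection rule needs a constraint qualification on the reduced space $\R^n_{\bar J}$, e.g.\ linear independence of $\nabla_{\bar J}g_i(\bar x)$, $i\in I(\bar x)$, and $\nabla_{\bar J}h_j(\bar x)$, as in your fallback. This does not follow from the R-LICQ as literally worded in the paper, which asks only for independence of the full gradients. Your claim that the restricted LICQ on $\R^n_J$ follows from the R-LICQ fails precisely when $J=\bar J$. For instance, take $n=2$, $s=1$, no $h$, $g(x)=(x_1-1)^2-x_2$, $f(x)=2x_1-x_1^2$, $\bar x=(1,0)$ (an isolated point of $\Omega$), $\bar\lambda=0$ and $u=e_1$: the literal hypotheses hold, yet $\partial^2L(\bar x,\bar\lambda)(u)=\{(-2,0)\}$, so the conclusion fails.
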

\begin{proof} If the R-LICQ holds for the constraint system $\Phi\cap \mathcal{S}$ at $\bar x$, then $\Lambda^S(\bar x)$ is nonempty. Let $(\bar\lambda, \bar \mu)\in\Lambda^S(\bar x)$ and $u\in  T_{\bar \Phi}(\bar x)\cap T_\mathcal{S}(\bar x)$. Since $\bar\Phi\subset \Phi$, the R-LICQ holds  for the constraint system $\bar\Phi\cap \mathcal{S}$ at $\bar x$. By Lemma \ref{Tang-Inter}, one has $T_{\bar \Phi\cap \mathcal{S}}(\bar x)= T_{\bar \Phi}(\bar x)\cap T_\mathcal{S}(\bar x)$ and so $u\in T_{\bar \Phi\cap \mathcal{S}}(\bar x)$. Hence there exist sequences $t_k\to 0^+$ and $u_k\to u$ as $k\to \infty$ such that $x_k:=\bar x+t_ku_k\in \bar \Phi\cap \mathcal{S}$   for all $k\in\N$.    If $\bar x$ is a local optimal solution to problem \eqref{Problem}, then
	\begin{align*}
		L(x_k, \bar\lambda, \bar\mu)-L(\bar x, \bar\lambda, \bar\mu)&=f(x_k)-f(\bar x)+ \sum_{i=1}^p\bar\lambda(g_i(x_k)-g_i(\bar x))
		\\
		& \ \ \ \ \ \ \ \ \ \ \ \ \ \ \ \ \ \ \ \ \ \ +\sum_{j=1}^q\mu_j(h_j(x_k)-h_j(\bar x))
		\\
		&=f(x_k)-f(\bar x)\geq 0
	\end{align*} 
	for all $k$ large enough. An argument similar to that used in the proof Theorem \ref{Necessary-equality} shows that 
	$$\langle \nabla_x L(\bar x, \bar\lambda, \bar\mu), u_k\rangle= \langle \nabla_x L(\bar x, \bar\lambda, \bar\mu), u\rangle =0$$
	for all $k$ large enough. Hence, by repeating the final part of the proof of Theorem   \ref{Necessary-equality},  we can find some $z\in \partial^2 L(\bar x, \bar\lambda, \bar\mu)(u)$ such that $\langle z, u\rangle\geq 0$. 
	
	When $\bar x$ is an isolated local  solution of order $2$ to problem \eqref{Problem}, the proof follows along the same lines as that of Theorem \ref{Necessary-equality}(iii) and is therefore omitted. \qed
\end{proof}

Since the RRCQ implies the R-LICQ, and the uniqueness of the Lagrange multiplier $\bar{\lambda}$ in $\Lambda(\bar{x})$ does not play any role in the proof of Theorem \ref{Necessary-equality}, we obtain the following consequence  of Theorem \ref{Necessary-Mixed}. It notes that $\bar \Phi=\Phi$ when $D=\{0_{\R^m}\}$.    
\begin{corollary}  Suppose that $\bar x\neq 0$ is a local optimal solution to \eqref{Problem} with $D=\{0_{\R^m}\}$,  and $f, \phi$ are of class $C^{1,1}$ at $\bar x$. If the R-LICQ holds at $\bar x$, i.e.,
	
	$\bullet$ the system $\{\nabla \phi_k(\bar x)\,:\, k=1, \ldots, m\}$, are linearly independent when $\|\bar x\|_0=s$;
	
	$\bullet$ the system $\{\nabla_{\bar J}\, \phi_k(\bar x)\,:\, k=1, \ldots, m\}$, are linearly independent when  $0<\|\bar x\|_0<s$,	
	\\
	then for each $\bar\lambda\in \Lambda^S(\bar x)$ and $u\in T_\Omega(\bar x)$, there exists $z\in \partial^2 L(\bar x, \bar\lambda, \bar\mu)(u)$ such that $\langle z, u\rangle\geq 0$.
\end{corollary}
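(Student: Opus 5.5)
The plan is to obtain the corollary as the special case $p=0$, $h=\phi$, $q=m$ of Theorem \ref{Necessary-Mixed}(i). With $D=\{0_{\R^m}\}$ there are no inequality constraints, so $\Phi=\{x\,:\,\phi_k(x)=0,\ k=1,\ldots,m\}$ fits the mixed-constraint setting \eqref{Mixed-constraint}. The index sets $I(\bar x)$ and $\bar I(\bar x)$ are empty, hence $\bar\Phi=\Phi$. The Lagrangian $L(x,\lambda,\mu)$ reduces to $f(x)+\langle\bar\lambda,\phi(x)\rangle$, with the multiplier of $\phi$ playing the role of $\mu$; I will read the notation $L(\bar x,\bar\lambda,\bar\mu)$ in the statement accordingly. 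In this setting the R-LICQ of the definition becomes exactly the two bulleted linear independence conditions in the statement. Likewise, $\Lambda^S(\bar x)$ in the mixed sense coincides with $\Lambda^S(\bar x)$ for \eqref{Problem}, since $N_{\{0\}}(0)=\R^m$ makes the condition $\lambda\in N_D(\phi(\bar x))$ vacuous.

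The key steps, in order, are as follows.

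First, check the hypotheses of Theorem \ref{Necessary-Mixed}(i): $f$ and $h=\phi$ are $C^{1,1}$ around $\bar x$, $\bar x\neq0$ is locally optimal, and the R-LICQ holds.

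Second, identify the direction set. By Lemma \ref{Tang-Inter}(i), which applies because R-LICQ implies R-MFCQ, we have
\[
T_\Omega(\bar x)=T_\Phi(\bar x)\cap T_\mathcal{S}(\bar x)=T_{\bar\Phi}(\bar x)\cap T_\mathcal{S}(\bar x).
\]
So every $u\in T_\Omega(\bar x)$ is an admissible direction in Theorem \ref{Necessary-Mixed}(i).

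Third, for any $\bar\lambda\in\Lambda^S(\bar x)$ (nonempty by Lemma \ref{Nonemty-Lagran}(ii), although nonemptiness is not needed for the universal claim), Theorem \ref{Necessary-Mixed}(i) yields $z\in\partial^2L(\bar x,\bar\lambda)(u)$ with $\langle z,u\rangle\ge0$. No uniqueness of the multiplier is used anywhere, which is consistent with the remark preceding the corollary.

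The only point needing care is the second step. Lemma \ref{Tang-Inter} is stated for the general mixed system, so I must confirm that it covers the degenerate case without inequalities. This is harmless, because the MFCQ requirement on $y$ reduces to $\langle\nabla h_j(\bar x),y\rangle=0$, which $y=0$ satisfies. If one prefers to avoid the lemma, it is enough to note that the proof of Theorem \ref{Necessary-Mixed} only uses sequences $x_k=\bar x+t_ku_k\in\Omega$, which the definition of $u\in T_\Omega(\bar x)$ supplies directly. It also uses $\langle\nabla_xL(\bar x,\bar\lambda),u_k\rangle=0$, which follows as in Theorem \ref{Necessary-equality}. Namely, if $\|\bar x\|_0<s$ then $\nabla_xL(\bar x,\bar\lambda)=0$. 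If $\|\bar x\|_0=s$ then $x_k\in\R^n_{\bar J}$ eventually and $\nabla_xL(\bar x,\bar\lambda)\in\R^n_{\bar J^c}$.
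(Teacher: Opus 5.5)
Your proposal is correct and follows the paper's own route: the corollary is read off from Theorem \ref{Necessary-Mixed}(i) with no inequality constraints, $h=\phi$, and $\bar\Phi=\Phi$. In your second step you only need the automatic inclusion $T_\Omega(\bar x)\subset T_{\Phi}(\bar x)\cap T_{\mathcal{S}}(\bar x)$, so the appeal to Lemma \ref{Tang-Inter} and to ``R-LICQ $\Rightarrow$ R-MFCQ'' is unnecessary. Your direct fallback argument also shows that the R-LICQ serves only to make $\Lambda^S(\bar x)$ nonempty.
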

\section{Second-Order Sufficient Optimality Conditions}\label{Section 4}
In this section, we present some second-order sufficient optimality conditions for problem \eqref{Problem} with mixed constraints, i.e., 
\begin{equation*}
	\Phi=\{x\in\R^n\,:\, g_i(x)\leq 0, i=1, \ldots, p,\, h_j(x)=0, j=1, \ldots, q\}
\end{equation*}
and $\Omega=\Phi\cap\mathcal{S}$. We assume that $f, g,$ and  $h$ are of class $C^{1,1}$ around the considered feasible solution $\bar x\in\Omega$. The {\em  cone of critical directions}  at $\bar x$ is defined by
\begin{align*}
	\mathcal{C}(\bar x):=\big\{u\in\R^n\,:\, \langle\nabla f(\bar x), u\rangle=0,\ \  &\langle\nabla g_i(\bar x), u\rangle\leq0, i\in I(\bar x), 
	\\
	&\langle\nabla h_j(\bar x), u\rangle=0, j=1, \ldots, q\big\}.
\end{align*}

\begin{theorem}\label{Sufficient-Theorem-1} Let $\bar x\in \Omega$ be an S-stationary point with $(\bar \lambda, \bar\mu)\in \Lambda^S(\bar x)$. If for every $u\in [T_\Omega(\bar x)\cap \mathcal{C}(\bar x)]\setminus\{0_{\R^n}\}$ and for any $z\in \partial^2 L(\bar x, \bar\lambda, \bar\mu)$ one has
	\begin{equation*}
		\langle z, u\rangle>0,
	\end{equation*}
	then $\bar x$ is an isolated local solution of order $2$ of problem \eqref{Problem}.	
\end{theorem}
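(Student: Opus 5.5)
We argue by contradiction, in the usual compactness way for second-order sufficient conditions. Suppose $\bar x$ is not an isolated local solution of order $2$. Then for each $k\in\N$ there is $x_k\in\Omega$ with $x_k\to\bar x$, $x_k\neq\bar x$, and
\begin{equation*}
f(x_k)<f(\bar x)+\tfrac{1}{2k}\|x_k-\bar x\|^2 .
\end{equation*}
Write $t_k:=\|x_k-\bar x\|\to 0^+$ and $u_k:=(x_k-\bar x)/t_k$. Passing to a subsequence, $u_k\to u$ with $\|u\|=1$, so $u\in T_\Omega(\bar x)\setminus\{0\}$ by definition of the contingent cone.

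\emph{Step 1: $u\in\mathcal{C}(\bar x)$.} Since $x_k\in\Phi$, we have $g_i(x_k)\le 0=g_i(\bar x)$ for $i\in I(\bar x)$ and $h_j(x_k)=0$. Dividing by $t_k$ and letting $k\to\infty$ gives $\langle\nabla g_i(\bar x),u\rangle\le0$ and $\langle\nabla h_j(\bar x),u\rangle=0$. The same argument with the displayed inequality gives $\langle\nabla f(\bar x),u\rangle\le 0$. For the reverse inequality we use S-stationarity: $\nabla f(\bar x)=-\sum\bar\lambda_i\nabla g_i(\bar x)-\sum\bar\mu_j\nabla h_j(\bar x)-w$ with $w\in\widehat N_{\mathcal S}(\bar x)$. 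Since $x_k\in\mathcal S$, the direction $u$ lies in some $\R^n_J$ with $J\supseteq\bar J$. If $|\bar J|=s$, then $w\in\R^n_{\bar J^c}$ and $u\in\R^n_{\bar J}$. If $|\bar J|<s$, then $w=0$. In both cases $\langle w,u\rangle=0$, and so $\langle\nabla f(\bar x),u\rangle=-\sum\bar\lambda_i\langle\nabla g_i(\bar x),u\rangle\ge0$. Hence $u\in\mathcal C(\bar x)$.

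\emph{Step 2: second-order expansion.} Using $\bar\lambda\ge0$, $g(x_k)\le0$, complementarity and $h(x_k)=0$, we get
\begin{equation*}
L(x_k,\bar\lambda,\bar\mu)-L(\bar x,\bar\lambda,\bar\mu)\le f(x_k)-f(\bar x)<\tfrac{1}{2k}t_k^2 .
\end{equation*}
Next we show that $\langle\nabla_xL(\bar x,\bar\lambda,\bar\mu),u_k\rangle=0$ for large $k$. This holds because $\nabla_xL(\bar x,\bar\lambda,\bar\mu)=-w$, and $u_k\in\R^n_{\Gamma(x_k)\cup\bar J}$ is orthogonal to $w$. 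For $|\bar J|=s$, $\Gamma(x_k)=\bar J$ for large $k$, exactly as in the proof of Theorem~\ref{Necessary-equality}; for $|\bar J|<s$, $w=0$. We then apply the lower estimate of Theorem~\ref{Taylor_formula} to $L(\cdot,\bar\lambda,\bar\mu)$ on $[\bar x,x_k]$ with direction $t_ku_k$. Together with Proposition~\ref{property}(i), this gives $z_k\in\partial^2L(\xi_k,\bar\lambda,\bar\mu)(u_k)$, for some $\xi_k\in[\bar x,x_k]$, such that
\begin{equation*}
\tfrac12 t_k^2\langle z_k,u_k\rangle\le L(x_k,\bar\lambda,\bar\mu)-L(\bar x,\bar\lambda,\bar\mu)<\tfrac{1}{2k}t_k^2 .
\end{equation*}
Therefore $\langle z_k,u_k\rangle<1/k$.

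\emph{Step 3: passing to the limit (main obstacle).} We need $z_k$ bounded with cluster points in $\partial^2L(\bar x,\bar\lambda,\bar\mu)(u)$. Proposition~\ref{property}(ii) only covers a fixed direction, whereas here the direction $u_k$ varies. To handle this, we use that $\partial^2\varphi(x)(v)=\partial\langle v,\nabla\varphi\rangle(x)$ for $C^{1,1}$ data. Then
\begin{equation*}
\partial\langle u_k,\nabla\varphi\rangle(\xi_k)\subset\partial\langle u,\nabla\varphi\rangle(\xi_k)+\ell\|u_k-u\|\,\mathbb B
\end{equation*}
by the sum rule, since $\langle u_k-u,\nabla\varphi\rangle$ is Lipschitz with constant $\ell\|u_k-u\|$. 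So $z_k=z_k'+\varepsilon_k$ with $z_k'\in\partial^2L(\xi_k,\bar\lambda,\bar\mu)(u)$ and $\varepsilon_k\to0$. Local boundedness and closedness from Proposition~\ref{property}(ii) then yield $z_k'\to z\in\partial^2L(\bar x,\bar\lambda,\bar\mu)(u)$ along a subsequence. In the limit, $\langle z,u\rangle\le0$ with $0\ne u\in T_\Omega(\bar x)\cap\mathcal C(\bar x)$, which contradicts the hypothesis.
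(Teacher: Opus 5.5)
Your proof is correct. Its skeleton matches the paper's: a contradiction sequence, normalized directions $u_k\to u$, the inclusion $u\in\mathcal{C}(\bar x)$ obtained from S-stationarity, a Taylor lower bound, and a passage to the limit. The one genuine difference is how you handle the second-order estimate.

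The paper splits $L(x_k,\bar\lambda,\bar\mu)-L(\bar x,\bar\lambda,\bar\mu)$ into two pieces:
\begin{itemize}
\item $L_{1k}=L(x_k,\bar\lambda,\bar\mu)-L(\bar x+t_ku,\bar\lambda,\bar\mu)$, shown to be $o(t_k^2)$ by the mean value theorem and the Lipschitz continuity of $\nabla_xL$;
\item $L_{2k}$, to which the Taylor formula is applied on the fixed-direction segment $[\bar x,\bar x+t_ku]$.
\end{itemize}
This keeps every second-order subgradient in $\partial^2L(\eta_k,\bar\lambda,\bar\mu)(u)$ with $u$ fixed, so Proposition~\ref{property}(ii) applies verbatim.

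You instead apply the Taylor formula directly on $[\bar x,x_k]$. This removes the $L_{1k}$ term, but it produces subgradients $z_k\in\partial^2L(\xi_k,\bar\lambda,\bar\mu)(u_k)$ with a moving direction. You pay for this with the perturbation estimate $\partial^2L(\xi)(u_k)\subset\partial^2L(\xi)(u)+\ell\|u_k-u\|\mathbb{B}$, which you obtain from the scalarization formula and the limiting sum rule for Lipschitz functions. That step is valid. A more direct alternative is also available: by robustness of the limiting normal cone to the closed set $\mathrm{gph}\,\nabla_xL(\cdot,\bar\lambda,\bar\mu)$, together with the coderivative bound $\|z\|\le\ell\|v\|$, the map $(x,v)\mapsto\partial^2L(x,\bar\lambda,\bar\mu)(v)$ is jointly outer semicontinuous and locally bounded.

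Both routes also need $\langle\nabla_xL(\bar x,\bar\lambda,\bar\mu),u_k\rangle=0$ for large $k$. In the paper this is hidden inside the claim $L_{1k}/t_k^2\to0$, borrowed from the proof of Theorem~\ref{Necessary-equality}; only $\langle\nabla_xL(\bar x,\bar\lambda,\bar\mu),u\rangle=0$ is actually shown there. You verify it explicitly from the support structure of $x_k$, which makes your argument more transparent on this point.
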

\begin{proof} Suppose to the contrary that $\bar x$ is not an isolated local solution of order $2$ of problem \eqref{Problem}. Then, there exists a sequence $x_k\in \Omega\setminus\{\bar x\}$ such that $x_k\to \bar x$ as $k\to\infty$ and
	\begin{equation}\label{New-equa-3}
		f(x_k)-f(\bar x)< \frac{1}{k}\|x_k-\bar x\|^2 \ \ \forall k\in\N.
	\end{equation}	 
	Put $t_k:=\|x_k-\bar x\|$ and $u_k:=\frac{x_k-\bar x}{\|x_k-\bar x\|}$, then $\|u_k\|=1$ for all $k\in\N$ and $t_k\to 0^+$ as $k\to\infty$.  By the boundedness of $u_k$, without loss of generality we may assume that $u_k$ converges to some $u\in\R^n$ with $\|u\|=1$. Clearly, $u\in T_\Omega(\bar x)$ and so $u\in T_\Phi(\bar x)\cap T_\mathcal{S}(\bar x)$. We claim that $u\in\mathcal{C}(\bar x)$. Indeed, since $x_k\in\Omega$, for each $i\in I(\bar x)$ one has
	\begin{equation*}
		0\geq g_i(x_k)=g_i(\bar x+t_ku_k)-g_i(\bar x)=t_k\langle \nabla g_i(\theta_k), u_k\rangle 
	\end{equation*}
	for some $\theta_k\in (\bar x, x_k)$. Dividing both sides by $t_k$ and then letting $k\to\infty$, we obtain $\langle \nabla g_i(\bar x), u\rangle\leq 0$ for $i\in I(\bar x)$. A similar argument applied to the equality constraints yields $\langle \nabla h_j(\bar x), u\rangle= 0$ for all  $j=1, \ldots, q$. On the other hand, it follows from \eqref{New-equa-3} that for each $k\in\N$, one has
	\begin{equation*}
		t_k\langle \nabla f(\xi_k), u_k\rangle< \frac{1}{k}t_k^2
	\end{equation*}
	for some $\xi_k\in (\bar x, x_k)$. Dividing both sides by $t_k$ and  letting $k\to\infty$, we obtain $\langle \nabla f(\bar x), u\rangle\leq 0$. We now show  that $\langle \nabla f(\bar x), u\rangle\geq 0$ and so $\langle \nabla f(\bar x), u\rangle= 0$. To this end, it suffices to show that $\langle\nabla_x L(\bar x, \bar \lambda, \bar\mu), u\rangle=0$. Indeed, if it is this case, then we have
	\begin{equation*}
		\langle \nabla f(\bar x), u\rangle= - \sum_{i\in I(\bar x)}\bar\lambda_i\langle \nabla g_i(\bar x), u\rangle-\sum_{j=1}^q\bar\mu_j\langle \nabla h_j(\bar x), u\rangle\geq 0, 
	\end{equation*}
	as required. Since $\bar x$ is an S-stationary, one has $-\nabla_x L(\bar x, \bar \lambda, \bar\mu)\in \widehat{N}_\mathcal{S}(\bar x)$. If $\|\bar x\|_0=s$, then  $\widehat{N}_\mathcal{S}(\bar x)=\R^n_{{\bar J}^c}$ and $T_\mathcal{S}(\bar x)=\R^n_{\bar J}$. This, together with the fact that $u\in T_\mathcal{S}(\bar x)$, implies that $\langle\nabla_x L(\bar x, \bar \lambda, \bar\mu), u\rangle=0$. If $\|\bar x\|_0<s$, then $\widehat{N}_\mathcal{S}(\bar x)=\{0_{\R^n}\}$ and so   $\langle\nabla_x L(\bar x, \bar \lambda, \bar\mu), u\rangle=0$. Hence  $\langle \nabla f(\bar x), u\rangle= 0$ and so $u\in\mathcal{C}(\bar x)$.
	
	Now, by the definition of the Lagrangian function and \eqref{New-equa-3}, we have
	\begin{align}
		L(x_k, \bar \lambda, \bar\mu)-L(\bar x, \bar \lambda, \bar\mu)=f(x_k)-f(\bar x)&+\sum_{i\in I(\bar x)}\bar\lambda_i[g_i(x_k)-g_i(\bar x)]\notag
		\\
		&+ \sum_{j=1}^q\bar\mu_j[h_j(x_k)-h_j(\bar x)]<\frac{1}{k}t_k^2 \label{New-equa-4}
	\end{align}
	for all $k\in\N$. Since $\langle\nabla_x L(\bar x, \bar \lambda, \bar\mu), u\rangle=0$, one has
	\begin{align*}
		L(x_k, \bar \lambda, \bar\mu)-L(\bar x, \bar \lambda, \bar\mu)&=[L(x_k, \bar \lambda, \bar\mu)-L(\bar x+t_ku, \bar \lambda, \bar\mu)]
		\\
		&+[L(\bar x+t_ku, \bar \lambda, \bar\mu)- L(\bar x, \bar \lambda, \bar\mu)- t_k\langle\nabla_x L(\bar x, \bar \lambda, \bar\mu), u\rangle].
	\end{align*} 
	Put $L_{1k}:=L(x_k, \bar \lambda, \bar\mu)-L(\bar x+t_ku, \bar \lambda, \bar\mu)$ and  
	\begin{equation*}
		L_{2k}:=L(\bar x+t_ku, \bar \lambda, \bar\mu)- L(\bar x, \bar \lambda, \bar\mu)- t_k\langle\nabla_x L(\bar x, \bar \lambda, \bar\mu), u\rangle.
	\end{equation*}
	An argument similar to that used in the proof Theorem \ref{Necessary-equality} shows that $\frac{L_{1k}}{t_k^2}\to 0$ when $k\to\infty$.  By applying the Taylor formula (Theorem \ref{Taylor_formula}) for the function $L(\cdot, \bar\lambda, \bar \mu)$ on $[\bar x, \bar x+t_ku]$, we can find $z_k\in \partial^2 L(\eta_k, \bar\lambda, \bar\mu)(u)$ for some $\eta_k\in [\bar x, \bar x+t_ku]$ such that
	\begin{equation*}
		L_{2k}\geq \frac{1}{2}t_k^2 \langle z_k, u\rangle
	\end{equation*} 
	and $z_k$ converges to some $z\in \partial^2 L(\bar x, \bar\lambda, \bar\mu)(u)$. This, together with \eqref{New-equa-4}, implies that
	\begin{equation*}
		\frac{1}{2}t_k^2 \langle z_k, u\rangle +L_{1k}\leq L_{1k}+L_{2k}\leq \frac{1}{k}t_k^2 \ \ \forall k\in\N.	
	\end{equation*}
	Dividing both sides by $\frac{1}{2}t^2_k$ and then letting $k\to\infty$, we obtain $\langle z, u\rangle\leq 0$, a contradiction. Hence $\bar x$ is an isolated local solution of order $2$ of problem \eqref{Problem}.  \qed
\end{proof}

Let us consider an example to illustrate  Theorem \ref{Sufficient-Theorem-1}.

\begin{example}\rm Consider problem \eqref{Problem} with $n=3$, $f(x)=x_1+(x_2-1)|x_2-1|$, $s=2$, 
\begin{align*}
\Phi:=\{x\in \R^3\,:\, g_1(x):=-x_1+1\leq 0,\ \ &g_2(x):=-x_2+1\leq 0,
\\
& h(x):= x_1+x_2+x_3-2=0\},
\end{align*}	
and $\Omega=\Phi\cap\mathcal{S}$. Let $\bar x=(1,1,0)\in\Omega$. Then, $\|\bar x\|_0=2$, $I(\bar x)=\{1,2\}$, $\bar J=\Gamma(\bar x)=\{1, 2\}$,   $\nabla f(\bar x)=(1, 0, 0)^\top$, $\nabla g_1(\bar x)= (-1, 0, 0)^\top$, $\nabla g_2(\bar x)= (0, -1, 0)$, and $\nabla h(\bar x)=(1,1,1)^\top$. Hence, $\widehat{N}(\bar x)=\R^3_{{\bar J}^c}=\text{span}\{e_3\}$ and $\{\nabla g_1(\bar x), \nabla g_2(\bar x), \nabla h(\bar x)\}$ are linear independent. This means that the R-LICQ holds at $\bar x$ and so $\Lambda^S(\bar x)$  is nonempty. By the R-LICQ, one has 
	\begin{align*}
		&T_\Omega(\bar x)=T_\Phi(\bar x)\cap T_\mathcal{S}(\bar x)=T_\Phi(\bar x)\cap \R^3_{\bar J}=T_\Phi(\bar x)\cap \text{span}\{e_1, e_2\}
		\\
		&=\{u\in\R^3\,:\, \langle\nabla g_1(\bar x), u\rangle\leq 0, \langle\nabla g_2(\bar x), u\rangle\leq 0,  \langle\nabla h(\bar x), u\rangle= 0\}\cap \text{span}\{e_1, e_2\}
		\\
		&=\{u\in\R^3\,:\, -u_1\leq 0, -u_2\leq 0, u_1+u_2+u_3=0\}\cap \text{span}\{e_1, e_2\}=\{0_{\R^3}\}.
	\end{align*}	
	Hence $[T_\Omega(\bar x)\cap \mathcal{C}(\bar x)]\setminus\{0_{\R^3}\}=\emptyset$. This, together with Theorem \ref{Sufficient-Theorem-1}, implies that $\bar x$ is an isolated local solution of order $2$ of \eqref{Problem}. We note here that since $f$ is not of class $C^2$ at $\bar x$, Theorem 4.2 in \cite{Pan-Xiu-Fan} cannot be applied for this example. 	
\end{example}
\section{Application to Sparse Multiobjective Optimization}\label{Section 5}
In this section, we consider the following sparse multiobjective optimization problem:
\begin{equation}\label{SMP}
	\mathrm{Min}_{\R^d_+} F(x):=(f_1(x), \ldots, f_d(x)) \ \ \text{s.t.}\ \ x\in\Omega:=\Phi\cap\mathcal{S},\tag{SMP} 
\end{equation} 
where $\Phi$ is defined as in \eqref{Mixed-constraint}. The Lagrangian function associated with problem \eqref{SMP} is defined as
\begin{equation*}
	L^M(x, \alpha, \lambda, \mu):=\langle \alpha, F(x)\rangle+\langle \lambda, g(x)\rangle+\langle \mu, h(x)\rangle, 
\end{equation*} 
for all $x\in\R^n,$ $\alpha\in\R^d_+\setminus\{0_{\R^d}\},$ $\lambda\in\R^p_+,$ and $\mu\in\R^q.$

\medskip
  
We assume that $F, g,$ and $h$ are of class $C^{1,1}$ around $\bar x$.
\begin{definition}\rm For $\bar x\in\Omega$, put 
	\begin{equation*} 
		\varphi(x):=\max\{\varphi_l(x)\,:\, l\in\{1, \ldots, d\}\}\ \ \forall x\in\R^n,	
	\end{equation*}
	where $\varphi_l(x):=f_l(x)-f_l(\bar x)$. We say that  $\bar x$ is an {\em isolated local  efficient solution of order $2$}  of \eqref{SMP} if $\bar x$ is an isolated local   solution of order $2$ of $\varphi$ on $\Omega$. 	
\end{definition}

Then the sets of {\em S-Lagrange multipliers} at $\bar x$ is defined by
\begin{align*}
	\mathrm{A}^S(\bar x):=\Big\{(\alpha,\lambda, \mu)\in (\R^d_+\setminus\{0_{\R^d}\})\times \R^p_+\times\R^q\,:\, &-\nabla_x L(\bar x, \alpha,  \lambda, \mu)\in \widehat{N}_\mathcal{S}(\bar x), 
	\\
	& \ \ \ \ \ \lambda_i g_i(\bar x)=0, i=1, \ldots, p\Big\}.	
\end{align*}
If $\mathrm{A}^S(\bar x)$ is nonempty, then $\bar x$ is called an {\em S-stationary point} of \eqref{SMP} at $\bar x$.

We say that $u\in \R^n$ is a {\em   critical direction} of \eqref{SMP}  at $\bar x$  if
\begin{equation*}
	\begin{cases}
		&\langle\nabla f_l(\bar x), u\rangle\leq 0, l=1, \ldots, d,
		\\
		&\langle\nabla f_l(\bar x), u\rangle=0 \ \ \text{for at least one}\ \ l\in \{l=1, \ldots, d\},
		\\
		&\langle\nabla g_i(\bar x), u\rangle\leq 0, i\in I(\bar x),
		\\
		&\langle\nabla h_j(\bar x), u\rangle=0, j=1, \ldots, q.
	\end{cases}
\end{equation*}

The set of critical directions of \eqref{SMP} at $\bar x$ is denoted by $\mathcal{K}(\bar x)$. 

The following theorem gives a second-order sufficient optimality condition for an isolated local  efficient solution of order $2$ of problem \eqref{SMP}.

\begin{theorem}\label{Sufficient-Theorem-2} Let $\bar x\in \Omega$ be an S-stationary point with $(\bar \alpha, \bar \lambda, \bar\mu)\in \mathrm{A}^S(\bar x)$. If for every $u\in [T_\Omega(\bar x)\cap \mathcal{K}(\bar x)]\setminus\{0_{\R^n}\}$ and for any $z\in \partial^2 L^M(\bar x, \bar\alpha, \bar\lambda, \bar\mu)$ one has
	\begin{equation*}
		\langle z, u\rangle>0,
	\end{equation*}
	then $\bar x$ is an isolated local solution of order $2$ of problem \eqref{SMP}.	
\end{theorem}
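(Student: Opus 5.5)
We argue by contradiction, following the proof of Theorem \ref{Sufficient-Theorem-1} but with the scalar objective replaced by the max-function $\varphi$. Suppose $\bar x$ is not an isolated local solution of order $2$ of $\varphi$ on $\Omega$. Then there are $x_k\in\Omega\setminus\{\bar x\}$ with $x_k\to\bar x$ and
\begin{equation*}
\varphi(x_k)<\tfrac{1}{k}\|x_k-\bar x\|^2 .
\end{equation*}
Since $\varphi(\bar x)=0$, this means $f_l(x_k)-f_l(\bar x)<\frac1k t_k^2$ for every $l=1,\ldots,d$. Set $t_k:=\|x_k-\bar x\|$ and $u_k:=(x_k-\bar x)/t_k$, and pass to a subsequence with $u_k\to u$, $\|u\|=1$. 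Then $u\in T_\Omega(\bar x)$.

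\textbf{Step 1: $u\in\mathcal{K}(\bar x)$.} The mean value argument from Theorem \ref{Sufficient-Theorem-1} gives $\langle\nabla g_i(\bar x),u\rangle\le 0$ for $i\in I(\bar x)$ and $\langle\nabla h_j(\bar x),u\rangle=0$ for all $j$. Applying the mean value theorem to each $f_l$ and dividing by $t_k$ yields $\langle\nabla f_l(\bar x),u\rangle\le 0$ for all $l$.

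It remains to find an index $l$ with equality. As in Theorem \ref{Sufficient-Theorem-1}, S-stationarity and $u\in T_\mathcal{S}(\bar x)$ give $\langle\nabla_x L^M(\bar x,\bar\alpha,\bar\lambda,\bar\mu),u\rangle=0$. There are two cases.
\begin{itemize}
\item If $\|\bar x\|_0=s$, then $u\in\R^n_{\bar J}$ and $-\nabla_xL^M\in\R^n_{\bar J^c}$.
\item If $\|\bar x\|_0<s$, then $\nabla_xL^M(\bar x,\bar\alpha,\bar\lambda,\bar\mu)=0$.
\end{itemize}
Hence
\begin{equation*}
\sum_l\bar\alpha_l\langle\nabla f_l(\bar x),u\rangle=-\sum_{i\in I(\bar x)}\bar\lambda_i\langle\nabla g_i(\bar x),u\rangle\ge 0 .
\end{equation*}
Every term on the left is $\le 0$, so each term with $\bar\alpha_l>0$ must vanish. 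Because $\bar\alpha\neq 0$, at least one $l$ has $\bar\alpha_l>0$, and therefore $\langle\nabla f_l(\bar x),u\rangle=0$ for that $l$. Thus $u\in\mathcal{K}(\bar x)$. The same computation also shows $\bar\lambda_i\langle\nabla g_i(\bar x),u\rangle=0$.

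\textbf{Step 2: the second-order contradiction.} Multiply the inequalities $f_l(x_k)-f_l(\bar x)<\frac1k t_k^2$ by $\bar\alpha_l\ge0$ and sum. Add $\sum_i\bar\lambda_i g_i(x_k)\le 0$, using $\bar\lambda_i g_i(\bar x)=0$, and $\sum_j \bar\mu_j h_j(x_k)=0$. This gives
\begin{equation*}
L^M(x_k,\bar\alpha,\bar\lambda,\bar\mu)-L^M(\bar x,\bar\alpha,\bar\lambda,\bar\mu)<\tfrac{1}{k}\Big(\sum_l\bar\alpha_l\Big)t_k^2 .
\end{equation*}
Then split the left side as $L_{1k}+L_{2k}$ exactly as before.

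For $L_{1k}$, the Lipschitz estimate from Theorem \ref{Necessary-equality} applies because $\langle\nabla_xL^M(\bar x,\cdot),u\rangle=0$, and it gives $L_{1k}/t_k^2\to0$. One should check that the estimate really uses only $\langle \nabla_x L^M,u\rangle=0$ and not orthogonality to $u_k$. Writing $\langle\nabla_xL^M(\xi_k)-\nabla_xL^M(\bar x),t_k(u_k-u)\rangle+t_k\langle\nabla_xL^M(\bar x),u_k-u\rangle$, the second term is $t_k\langle\nabla_x L^M(\bar x),u_k\rangle$. Here one uses S-stationarity together with $u_k\in T_\mathcal{S}$-type structure: in the full-support case $x_k\in\R^n_{\bar J}$ for large $k$, and otherwise $\nabla_xL^M(\bar x)=0$. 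So this term is zero, as in Theorem \ref{Necessary-equality}.

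For $L_{2k}$, the Taylor formula (Theorem \ref{Taylor_formula}) applied to $L^M(\cdot,\bar\alpha,\bar\lambda,\bar\mu)$ together with Proposition \ref{property} gives
\begin{equation*}
L_{2k}\ge \tfrac12 t_k^2\langle z_k,u\rangle,\qquad z_k\in\partial^2L^M(\eta_k,\bar\alpha,\bar\lambda,\bar\mu)(u),
\end{equation*}
with $z_k\to z\in\partial^2L^M(\bar x,\bar\alpha,\bar\lambda,\bar\mu)(u)$. Dividing by $t_k^2/2$ and letting $k\to\infty$ yields $\langle z,u\rangle\le0$. This contradicts the hypothesis, since $u\in[T_\Omega(\bar x)\cap\mathcal{K}(\bar x)]\setminus\{0\}$.

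The main obstacle is Step 1: establishing the ``at least one equality'' condition in $\mathcal{K}(\bar x)$. It rests on the sign argument with $\bar\alpha\neq0$ described above.
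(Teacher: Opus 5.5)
Your proposal is correct and follows the same route as the paper. It uses a contradiction sequence with normalized directions $u_k\to u$, shows $u\in\mathcal{K}(\bar x)$ via S-stationarity and $\bar\alpha\neq 0$, and finishes with the $L_{1k}+L_{2k}$ splitting plus the Taylor formula. The only difference in the $\mathcal{K}(\bar x)$ step is that you show directly that every $l$ with $\bar\alpha_l>0$ gives equality, whereas the paper argues by contradiction. Your explicit check that $\langle\nabla_x L^M(\bar x,\bar\alpha,\bar\lambda,\bar\mu),u_k\rangle=0$ for large $k$ fills in a detail the paper leaves implicit in ``an argument similar to''. You get it from $x_k\in\R^n_{\bar J}$ when $\|\bar x\|_0=s$, and from $\nabla_x L^M(\bar x,\bar\alpha,\bar\lambda,\bar\mu)=0$ otherwise.
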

\begin{proof} Suppose to the contrary that $\bar x$ is not an isolated local solution of order $2$ of problem \eqref{SMP}. Then, there exists a sequence $x_k\in \Omega\setminus\{\bar x\}$ such that $x_k\to \bar x$ as $k\to\infty$ and
	\begin{equation*}
		\varphi(x_k)=\max\{f_l(x_k)-f_l(\bar x)\,:\, l\in\{1, \ldots, d\}\}< \frac{1}{k}\|x_k-\bar x\|^2 \ \ \forall k\in\N.
	\end{equation*}	
	This implies that
	\begin{equation}\label{New-equa-5}
		f_l(x_k)-f_l(\bar x)< \frac{1}{k}\|x_k-\bar x\|^2 \ \ \forall l=1, \ldots, d, \forall k\in\N.	
	\end{equation}
	
	Put $t_k:=\|x_k-\bar x\|$ and $u_k:=\frac{x_k-\bar x}{\|x_k-\bar x\|}$, then $\|u_k\|=1$ for all $k\in\N$ and $t_k\to 0^+$ as $k\to\infty$.  Since  $\|u_k\|=1$ for all $k\in\N$, without loss of generality we may assume that $u_k$ converges to some $u\in\R^n$ with $\|u\|=1$. Then, $u\in T_\Omega(\bar x)$ and so $u\in T_\Phi(\bar x)\cap T_\mathcal{S}(\bar x)$. We claim that $u\in\mathcal{K}(\bar x)$. Indeed, analysis similar to that in the proof of Theorem \ref{Sufficient-Theorem-1} shows that   $\langle \nabla g_i(\bar x), u\rangle\leq 0$ for $i\in I(\bar x)$, and $\langle \nabla h_j(\bar x), u\rangle= 0$ for all  $j=1, \ldots, q$. Furthermore, by \eqref{New-equa-5}, it is easy to see that  $\langle \nabla f_l(\bar x), u\rangle\leq 0$ for all $l=1, \ldots, d$. We now show  that $\langle \nabla f_l(\bar x), u\rangle = 0$ for at least one $l\in\{1, \ldots, d\}$ and so $u\in\mathcal{K}(\bar x)$. If otherwise, then $\langle \nabla f_l(\bar x), u\rangle< 0$ for all $l=1, \ldots, d$. This, together with the fact that $\bar \alpha\in \R^d_+\setminus\{0_{\R^d}\}$, implies that $\sum_{l=1}^d\alpha_l\langle \nabla f_l(\bar x), u\rangle<0$.  Since $\bar x$ is an S-stationary point, analysis similar to that in the proof of Theorem \ref{Sufficient-Theorem-1} shows that  $\langle\nabla_x L^M(\bar x, \bar\alpha, \bar \lambda, \bar\mu), u\rangle=0$. Hence 
	
	\begin{equation*}
		0>\sum_{l=1}^d\alpha_l\langle \nabla f_l(\bar x), u\rangle= - \sum_{i\in I(\bar x)}\bar\lambda_i\langle \nabla g_i(\bar x), u\rangle-\sum_{j=1}^q\bar\mu_j\langle \nabla h_j(\bar x), u\rangle\geq 0, 
	\end{equation*}
	a contradiction.
	
	Now, by the definition of the Lagrangian function and \eqref{New-equa-5}, we have
	\begin{align*}
		L(x_k, \bar\alpha, \bar \lambda, \bar\mu)-L(\bar x,\bar\alpha, \bar \lambda, \bar\mu)=\sum_{l=1}^d&\bar\alpha_l[f_l(x_k)-f_l(\bar x)]+\sum_{i\in I(\bar x)}\bar\lambda_i[g_i(x_k)-g_i(\bar x)]\notag
		\\
		&+ \sum_{j=1}^q\bar\mu_j[h_j(x_k)-h_j(\bar x)]<\frac{1}{k}\Big(\sum_{l=1}^d\bar\alpha_l\Big)t_k^2  
	\end{align*}
	for all $k\in\N$. An argument similar to that used in the proof Theorem \ref{Sufficient-Theorem-1} shows that there exists $z\in \partial^2 L^M(\bar x, \bar\alpha, \bar\lambda, \bar\mu)$  such that   $\langle z, u\rangle\leq 0$, contradicting the assumption. Hence $\bar x$ is an isolated local solution of order $2$ of problem \eqref{SMP}. The proof is complete.  \qed
\end{proof}

\section{Conclusions}\label{Section 6}

In this paper, we investigated second-order optimality conditions for sparse optimization problems with $C^{1,1}$ data. By employing the limiting (Mordukhovich) second-order subdifferential of the associated Lagrangian function, we established new second-order necessary and sufficient optimality conditions for the considered problems. In contrast to approaches based on Fr\'echet second-order subdifferentials, the present framework is applicable to a broader class of nonsmooth problems since the limiting second-order subdifferential of $C^{1,1}$ functions is always nonempty and compact. As an application, we also derived second-order sufficient optimality conditions for sparse multiobjective optimization problems. Several examples were presented to illustrate the applicability and effectiveness of the obtained results.

Second-order optimality conditions for sparse optimization problems, and even for optimization problems without sparse constraints, with merely $C^1$ data remain open. Dealing with this challenging problem requires new ideas and techniques. We plan to investigate this issue in future work.

\section*{Acknowledgments}  A part of this work was done while third author was visiting Academy for Advanced Interdisciplinary Studies, Northeast Normal University, Changchun, Jilin, China in April--May, 2026. He is grateful to the Academy for its hospitality and support.

\section*{Funding} The research of L.T.T. Huyen and N.V. Tuyen is funded by Hanoi Pedagogical University 2 Foundation for Sciences and Technology Development under Grant number HPU2.2025-UT-10. The research of L. Jiao is partially supported by the Chinese National Natural Science Foundation under grant number 12371300. 

\section*{Disclosure Statement} 
The authors declare that they have no conflict of interest.

\section*{Data Availability} There is no data included in this paper.

\bibliographystyle{amsplain}

\end{document}